\documentclass[times,doublespace]{nlaauth}
\usepackage{moreverb}
\usepackage{subcaption}
\newcommand\BibTeX{{\rmfamily B\kern-.05em \textsc{i\kern-.025em b}\kern-.08em
T\kern-.1667em\lower.7ex\hbox{E}\kern-.125emX}}
\usepackage[english]{babel}
\usepackage{pgf}
\usepackage{tikz}
\usepackage[utf8]{inputenc}
\usepackage{amsmath,amssymb,amsfonts}
\usepackage{enumerate}
\usepackage{blkarray}
\usepackage{cite}
\usepackage{subcaption} 
\usepackage{mathtools}
\usepackage{algorithm}
\usepackage{enumerate}
\usepackage[noend]{algpseudocode}
\makeatletter
\def\BState{\State\hskip-\ALG@thistlm}
\makeatother
\usetikzlibrary{calc,fit,matrix,arrows,automata,positioning,patterns}
\usepackage{hyperref}
\usepackage{xcolor}
\hypersetup{
    colorlinks,
    linkcolor={blue!50!black},
    citecolor={blue!50!black},
    urlcolor={blue!80!black}
}
\newcommand{\B}[1]{\mbox{\boldmath $#1$}}

\usepackage{pgfplots}
\usepackage{hyperref}

\DeclareMathOperator{\diag}{\mathcal D}

\newtheorem{theorem}{Theorem}[section]

 \newtheorem{example}[theorem]{Example}
  \newtheorem{definition}[theorem]{Definition}

\usepackage{siunitx}
\sisetup{output-exponent-marker=\ensuremath{\mathrm{e}}}


\begin{document}

\runningheads{A.~Aristodemo, L.~Gemignani}{Accelerating the Sinkhorn-Knopp iteration by  Arnoldi-type  methods}

\title{Accelerating the Sinkhorn-Knopp iteration by  Arnoldi-type  methods}

\author{A.~Aristodemo\affil{1}, L.~Gemignani\affil{4}\corrauth}

\address{\affilnum{1} Dipartimento di Matematica, Universit\`a di Pisa,
 Largo B. Pontecorvo, 5 - 56127 Pisa, Italy\break
 \affilnum{2} Dipartimento di Informatica, Universit\`{a} di Pisa,
Largo Bruno Pontecorvo, 3 - 56127 Pisa, Italy. E-mail: luca.gemignani@unipi.it}

\corraddr{luca.gemignani@unipi.it}

\cgs{The work of L.~Gemignani was partially supported by the GNCS/INdAM project
 ``Tecniche  Innovative per Problemi di Algebra Lineare'' and by the University
of Pisa (grant PRA 2017-05). }

\begin{abstract}
  It is shown that the problem of balancing  a nonnegative matrix by positive diagonal matrices can be
  recast as a constrained nonlinear  multiparameter eigenvalue problem.  Based on this equivalent formulation
  some adaptations of the  power method and  Arnoldi process are proposed for  computing the  dominant eigenvector which
  defines the structure of the diagonal transformations. Numerical results illustrate
  that our novel  methods  accelerate significantly  the convergence of the  customary Sinkhorn-Knopp iteration
 for matrix balancing  in the case  of clustered dominant eigenvalues. 
\end{abstract}

\keywords{Sinkhorn-Knopp iteration,  Nonlinear Eigenvalue Problem, Power method, Arnoldi method}

\maketitle

\section{Introduction}
Many important types of  data,  like text,  sound, event logs,
biological sequences,  can be viewed
as graphs  connecting basic data elements.   Networks  provide a powerful   tool  for describing the dynamic behavior of
systems in biology, computer science, information engineering.
Networks and graphs  are generally represented as  very large nonnegative  matrices   describing either the
network topology, quantifying certain attributes of nodes or exhibiting the correlation between certain node features.
Among the challenging  theoretical and computational problems  with these matrices there are  the  balancing/scalability issues. 

The Sinkhorn-Knopp (SKK)  balancing problem  can be stated as follows:  Given a nonnegative matrix
$A\in \mathbb R^{n\times n}$ $(A\geq 0)$,
find if they  exist
two nonnegative diagonal matrices $D_1, D_2\in \mathbb R^{n\times n}$ such that  $S=D_1 A D_2$ is doubly stochastic, i.e.,
\begin{equation}\label{skkmio}
 D_2 A^TD_1 \B e=\B e, \quad D_1 AD_2\B e=\B e, \quad \B e=\left[1,\ldots, 1\right]^T.
 \end{equation}
 The problem  was raised in three  different papers \cite{SK0,SK1,SK2} that 
 contain the well-known iteration for matrix balancing that bears their names.
 Several equilibration problems exist 
in which row or column norms are not equal but rather are specified by positive vectors.
Variants of the  SKK problem 
have attracted attention
in various fields of pure and applied sciences  including
input-output
analysis in economics \cite{AP},  optimal transportation  theory  and its applications
in machine learning \cite{CU},  complex network analysis \cite{KN,BF},  probabilistic and statistical modeling  \cite{RL},
optimization of traffic and telecommunication  flows \cite{LS} and matrix preconditioning \cite{DB}.
For a general review and summary of these applications  one can see  \cite{IDEL}.

For any admissible vector $\B v\in  \mathbb R^n$ and any $\alpha\in \mathbb Z$, $\alpha \neq 0$,
let $\mathcal D^{\alpha}(\B v)$ be
defined as the  $n\times n$ diagonal matrix  with diagonal entries $d_i=v_i^{\alpha}$, $1\leq i\leq n$.
Then the  computation  in \eqref{skkmio} 
amounts to find two vectors $\B r$ and $\B c$ such that $D_1=\mathcal D(\B r)$ and $D_2=\mathcal D(\B c)$ satisfy
\[
\left\{ \begin{array}{ll}
  \mathcal D(\B c)A^T\mathcal D(\B r)\B e=\mathcal D(\B c)A^T\B r=\mathcal D(A^T\B r)\B c=\B e; \\
  \mathcal D(\B r)A\mathcal D(\B c)\B e=\mathcal D(\B r)A\B c=\mathcal D(A\B c)\B r=\B e
\end{array}
\right.
\]
When $A$ is symmetric we can  determine  $\B r=\B c=\B z$  to satisfy $\mathcal D(A\B z)\B z= \mathcal D(\B z) A\B z=\B e$.
In \cite{SK1} the authors proposed the following fixed point iteration --called Sinkhorn-Knopp (SKK) iteration--
for computing  the desired vectors  $\B r$ and $\B c$:
\begin{equation}\label{e1}
\left\{ \begin{array}{ll}
 \B c_{k+1}=\mathcal D^{-1}(A^T\B r_k)\B e; \\
  \B r_{k+1}=\mathcal D^{-1}(A\B c_{k+1})\B e
\end{array}
\right.
\end{equation}
In the symmetric case  the  SKK iteration reduces to
\begin{equation}\label{e2}
\B z_{k+1}= \mathcal D^{-1}(A\B z_k)\B e
\end{equation}
or, equivalently, by setting $1./\B z_k=\B x_k$ with the assumption $1/0=+\infty$,
\begin{equation}\label{e3}
\B x_{k+1}= 1./\B z_{k+1} = A \B z_k=A (1./\B x_k).
\end{equation}
The SKK iterations \eqref{e1},\eqref{e2},\eqref{e3} have been rediscovered several times in different applicative contexts. 
Related methods are  the RAS method \cite{AP} in economics,  the iterative proportional fitting procedure (IPFP) in statistics
and Kruithof's projection scheme \cite{LS}  in optimization.

A common drawback of all  these iterative algorithms  is
the slow convergence behavior  exhibited even in deceivingly simple cases.
To explain this performance gap we observe that
the equations in \eqref{e1} can be combined to  get
\begin{equation}\label{e3_single}
  \B c_{k+1}=\mathcal D^{-1}(A^T \mathcal D^{-1}(A\B c_k)\B e)\B e, \quad k\geq 0, 
\end{equation}
which can be  expressed componentwise as
\[
(\B c_{k+1})_s=\left(\sum_{m=1}^n a_{m,s}
\left(\sum_{\ell=1}^na_{m,\ell}(\B c_k)_\ell\right)^{-1}\right)^{-1}, \quad 1\leq s\leq n, \ k\geq 0.
\]
This means that \eqref{e3_single}  is equivalent to the fixed point iteration
\begin{equation}\label{e3_single_1}
 \B c_{k+1}=T( \B c_{k}), \quad T(\B x_s)=\left(\sum_{m=1}^n a_{m,s}
 \left(\sum_{\ell=1}^na_{m,\ell}\B x_\ell\right)^{-1}\right)^{-1},
\end{equation}
for solving
\begin{equation}\label{fpp}
  \B x= T\B x, \quad \B x\geq \B 0,
  \end{equation} 
where $T$ is the nonlinear operator introduced by Menon in \cite{Menon2,Menon1} and  according
to those papers we write $T\B x=T(\B x)$.

Our  first  contribution  consists of  a  novel formulation of the  fixed point problem  \eqref{fpp}
as a constrained nonlinear multiparameter  eigenvalue problem of the form
\begin{equation}\label{nlevp}
\B x= J_T(\B x)\B x, \quad \B x\geq \B 0,
\end{equation}
where $J_T(\B z)$ denotes   the Jacobian matrix  of $T$ evaluated at the point $\B z$. Although
the proof is quite simple, to our knowledge   this property has been   completely overlooked in the literature
even though  it has several implications. 

From a theoretical viewpoint, it follows that the local dynamics of the
original SKK  algorithm  \eqref{e3_single} can be described as a  power method with perturbations \cite{Ste}
applied to the  matrix $J_T(\B x)$ evaluated at the  fixed point.  Therefore  the
SKK iterations \eqref{e1},\eqref{e2},\eqref{e3} inherit the pathologies of the power  process
in the case of clustered dominant eigenvalues of  $J_T(\B x)$. A related  result  has appeared in \cite{KN}.

Furthermore,  relation  \eqref{nlevp} can also be exploited  in order to speed up the  computation of the
Sinkhorn-Knopp vector.  Acceleration  methods  using nonlinear solvers applied to equation \eqref{fpp}  have been  recently
proposed in  \cite{KR}   whereas 
 optimization strategies and descending techniques  are considered  in \cite{KKS,PL}. 
  In this paper  we pursue a different approach  by  taking into account 
the properties of the  equivalent nonlinear  multiparameter eigenvalue problem \eqref{nlevp}.
Krylov methods are the algorithms of choice for the computation of a few eigenvalues of largest magnitude of
matrices.  They have  been efficiently used in  information retrieval and web search engines for  accelerating
PageRank computations \cite{YYN, GoGr}.  In particular,   Arnoldi-based methods have been proven to be efficient
for achieving
eigenvalue/eigenvector separation \cite{GoGr}. Based on this, 
we propose  here to  compute an approximation of the fixed point of $T$ by using a different  fixed point iteration method
of the form 
$ \lambda_k\B v_{k+1}= J_T(\B v_{k})\B v_{k+1}$, $k\geq 0$,  where $\lambda_k$ is the dominant eigenvalue of
$J_T(\B v_{k})$ with corresponding normalized eigenvector $\B v_{k+1}$. Each iteration amounts to  approximate
 the  dominant eigenpair of a matrix $J_T(\B v_{k})$. 
Fast eigensolvers relying upon  the power method and  the Arnoldi process
  are  specifically tailored to solve these problems for large-scale matrices.
 Numerical results show that the resulting schemes are successful attempts to accelerate the convergence of 
 the SKK iterations in the case of  clustered dominant eigenvalues of $J_T(\B x)$.

 The paper is organized as follows. In Section \ref{sec2}  after briefly recalling the properties of  the
 SKK fixed point iteration \eqref{e3_single_1}  we  exploit the eigenvalue connection by  devising accelerated
 variants of \eqref{e3_single_1}  using Arnoldi-type  methods.   The description and implementation of these variants
 together with numerical results are discussed in Section \ref{sec3}.  Finally,
 in section \ref{sec4} conclusion and some remarks on
 future work are given.

\section{Theoretical Setup}\label{sec2}
Let  us denote by $\mathcal P, \mathcal P_0$ and $\mathcal P_\infty$  the subsets of
$\bar {\mathbb R}^n$, $\bar {\mathbb R}=\mathbb R \cup \{ \pm \infty \}$, defined by
$\mathcal P=\{\B x\in \mathbb R^n\colon \B x \geq \B 0\}$, $\mathcal P_0=\{\B x\in \mathbb R^n\colon \B x > \B 0\}$ and
$\mathcal P_\infty=\{\B x\in \bar{\mathbb R}^n\colon \B x \geq \B 0\}$, respectively.  For the sake of simplicity
we assume that $A\in \mathbb R^{n\times n}$
is  a matrix with all  positive entries,  that is, $A>0$.  Results  for more general
indecomposable nonnegative
matrices  are obtained by a continuity argument using the  perturbative analysis  introduced in
\cite{Menon1} (see also Section 6.2 in \cite{LN_book}).  Numerical evidences  shown in Section \ref{sec3}
indicate that our approach also works in the more
general setting.

Arithmetic operations are 
generalized  over    the nonnegative extended real line  $[0, +\infty]\subset \bar {\mathbb R}$ by setting \cite{Menon1}
$1/0=\infty$, $1/\infty=0$, $\infty +\infty=\infty$, $0\cdot \infty=0$, $a\cdot \infty=\infty$ if $a>0$,
where  $\infty=+\infty$. Under these assumptions we can introduce the nonlinear operators defined as follows:
\begin{enumerate}
\item $U\colon \mathcal P_\infty \rightarrow \mathcal P_\infty$, $U\B x=1./\B x$;
\item  $S\colon \mathcal P_\infty \rightarrow \mathcal P_\infty$, $S\B x= U A \B x$;
\item  $T\colon \mathcal P_\infty \rightarrow \mathcal P_\infty$, $T\B x= U A ^T U A \B x$.
\end{enumerate}
In this way  it can be easily noticed that $T$ is the  same as the operator introduced in \eqref{e3_single_1} and, therefore,
the Sinkhorn-Knopp problem for the matrix $A$ reduces to computing  the fixed points of $T$, that is,
the vectors $\B x \in \mathcal P_\infty$  such that
\begin{equation}\label{fp}
  \B x=T \B x= U A ^T U A \B x, \quad \B x\in \mathcal P_\infty.
  \end{equation}
Summing up the results stated in \cite{Menon2,Menon1} we obtain the following theorem concerning
the existence and the uniqueness of the  desired fixed point.
\begin{theorem}
  Let $A\in \mathbb R^{n\times n}$ be  a matrix with all positive entries.
  Then $\forall \B u \in \mathcal P\setminus \{\B 0\}$ we have $\sup\{\lambda\colon T\B u\geq \lambda \B u\}\leq 1$.  Moreover, 
  $T$ has a distinct  eigenvalue equal to $1$ with a
  unique (except   for   positive scalar multiples) corresponding   eigenvector $\B x\in \mathcal P_0$.
\end{theorem}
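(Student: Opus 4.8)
\medskip\noindent\textit{Proof idea.} The plan is to read off three structural properties of $T=UA^TUA$ from the positivity of $A$ and then run a nonlinear Perron--Frobenius argument. First I would observe that, because $A>0$, for every $\B u\in\mathcal P\setminus\{\B 0\}$ the vector $A\B u$ has strictly positive, finite entries, hence so do $UA\B u$, $A^TUA\B u$ and $T\B u$; thus $T$ maps $\mathcal P\setminus\{\B 0\}$ into $\mathcal P_0$, is finite-valued and continuous there, and no extended-real arithmetic is actually needed. Next, since $U$ is positively homogeneous of degree $-1$, the operator $T$ is positively homogeneous of degree $+1$, i.e.\ $T(\alpha\B u)=\alpha\,T\B u$ for all $\alpha>0$. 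Finally $T$ is \emph{strongly} order preserving: if $\B 0\le\B a\le\B b$ with $\B a\ne\B b$ then $A\B a<A\B b$ componentwise (as $A>0$), whence $UA\B a>UA\B b$, then $A^TUA\B a>A^TUA\B b$, and therefore $T\B a<T\B b$ in \emph{every} component.

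The first assertion follows from a one-line identity. Given $\B u\in\mathcal P\setminus\{\B 0\}$, set $\B w=A^TUA\B u>\B 0$, so that $T\B u=U\B w=1./\B w$, that is $(T\B u)_i\,w_i=1$ for every $i$, while swapping the order of summation gives
\[
\sum_{i=1}^n u_i w_i=\sum_{i=1}^n u_i\sum_{j=1}^n\frac{a_{ji}}{\sum_{k=1}^na_{jk}u_k}
=\sum_{j=1}^n\frac{\sum_{i=1}^n a_{ji}u_i}{\sum_{k=1}^n a_{jk}u_k}=n .
\]
If $T\B u\ge\lambda\B u$, then multiplying the $i$-th inequality by $w_i>0$ and summing yields $n=\sum_i(T\B u)_iw_i\ge\lambda\sum_i u_iw_i=\lambda n$, so $\lambda\le1$; taking the supremum over such $\lambda$ (the set is nonempty and bounded above since $\B u$ has a positive coordinate) proves the first claim. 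By the same manipulation, if in addition $\B u>\B 0$ and $T\B u\le\lambda\B u$ then $\lambda\ge1$.

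For existence I would apply Brouwer's theorem to the map $g(\B x)=T\B x/(\B e^TT\B x)$ on the standard simplex $\Sigma=\{\B x\in\mathcal P\colon\B e^T\B x=1\}$, which is compact and convex and on which $g$ is well defined and continuous since $T\B x>\B 0$ there. A fixed point $\B x^\star$ of $g$ satisfies $T\B x^\star=\mu\B x^\star$ with $\mu=\B e^TT\B x^\star>0$, and $\B x^\star=\mu^{-1}T\B x^\star\in\mathcal P_0$. Applying the two one-sided bounds of the previous paragraph to $\B u=\B x^\star$ — for which $T\B u=\mu\B u$ gives both $T\B u\ge\mu\B u$ and $T\B u\le\mu\B u$ — forces $\mu\le1$ and $\mu\ge1$, hence $\mu=1$: $1$ is an eigenvalue of $T$ with a positive eigenvector.

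Uniqueness is the step that genuinely uses $A>0$. Let $\B x,\B y\in\mathcal P_0$ be two fixed points of $T$ and set $c=\max_i(y_i/x_i)\in(0,\infty)$, so that $c\B x\ge\B y$ with equality in some coordinate $i_0$. If $c\B x\ne\B y$, strong order preservation combined with homogeneity gives $c\B x=c\,T\B x=T(c\B x)>T\B y=\B y$ in every component, contradicting $(c\B x)_{i_0}=y_{i_0}$; hence $\B y=c\B x$. Finally, any nonnegative eigenvector $\B u\ne\B 0$ of $T$ is automatically positive, because $T\B u>\B 0$ and the associated eigenvalue must then be positive, and the two-sided bound forces that eigenvalue to equal $1$; so $1$ is indeed the only eigenvalue of $T$ admitting a nonnegative eigenvector and its eigenvector is unique up to a positive scalar. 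The one place needing more than bookkeeping is this uniqueness argument, which rests on the \emph{strict} monotonicity of $T$; the extension to general indecomposable nonnegative $A$ then follows by the perturbation/continuity argument of \cite{Menon1}.
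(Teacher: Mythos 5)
Your proposal is correct. Note first that the paper does not actually prove this theorem: it is stated as a summary of results cited from Menon and Menon--Schneider, so there is no in-paper argument to compare against. Your write-up is a legitimate self-contained proof for the stated hypothesis $A>0$, and its three pillars all check out: (i) the identity $\sum_i u_i w_i=n$ for $\B w=A^TUA\B u$ (interchange of summation), combined with $(T\B u)_i w_i=1$, yields the Collatz--Wielandt-type bound $\sup\{\lambda\colon T\B u\ge\lambda\B u\}\le 1$ and, for positive $\B u$, the companion bound $\lambda\ge 1$ whenever $T\B u\le\lambda\B u$; (ii) Brouwer's theorem on the simplex produces an eigenpair $(\mu,\B x^\star)$ with $\B x^\star\in\mathcal P_0$, and the two one-sided bounds pinch $\mu=1$; (iii) degree-one homogeneity plus strict order preservation --- the one place where $A>0$ is indispensable --- gives uniqueness of the eigenvector up to positive scalars and excludes any other eigenvalue with a nonnegative eigenvector, since $T\B u>\B 0$ forces such an eigenvector into $\mathcal P_0$. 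This is essentially the route of the cited sources: the summation identity in (i) is the core of Menon's lemma, and the rest is the standard nonlinear Perron--Frobenius package, so you have in effect reconstructed the referenced proof rather than found a genuinely new one. The only caveat, which you flag yourself, is that the argument is confined to strictly positive $A$; the extension to indecomposable nonnegative matrices requires the separate perturbative/continuity analysis the paper alludes to, because strict monotonicity fails once $A$ has zero entries.
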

The basic  SKK algorithm  proceeds to approximate the eigenvector $\B x\in \mathcal P_0$ by means of
the fixed point iteration
\begin{equation}\label{skk}
  \left\{\begin{array}{ll}
  \B x^{(0)}\in \mathcal P_0;\\
  \B x^{(k+1)}=T \B x^{(k)}, \quad k\geq 0\end{array}
  \right.
\end{equation}
The iteration is shown to be globally convergent  since $T$ is  a 
contraction for the Hilbert  metric associated to
the cone $\mathcal P$.
\cite{BPS,LN}.
\begin{theorem}
  For any   $\B x^{(0)}\in \mathcal P_0$  there exists $\gamma=\gamma(\B x^{(0)}) \in \mathbb R$, $\gamma>0$,
  such that
  \[
  \lim_{k\rightarrow \infty}\B x^{(k)}=\gamma \B x.
  \]
\end{theorem}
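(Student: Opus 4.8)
The plan is to prove convergence of $\{\B x^{(k)}\}$ by combining the Hilbert-projective-metric machinery that already underlies the contraction property quoted above with the positive homogeneity of $T$; the latter is what allows one to pass from convergence of \emph{rays} (all that the Hilbert metric sees) to convergence of the vectors $\B x^{(k)}$ themselves. Recall that on $\mathcal P_0$ the Hilbert metric $d_H(\B x,\B y)$ is the logarithm of the ratio between the largest and the smallest of the numbers $x_i/y_i$; it is scale invariant, is a genuine metric once one quotients by positive scalings, and on any normalization slice (say $\B e^T\B x=1$) the resulting space is complete. Two elementary facts drive the argument. First, the entrywise reciprocal $U\B x=1./\B x$ is an involutive $d_H$-isometry, since $d_H(U\B x,U\B y)=d_H(\B y,\B x)=d_H(\B x,\B y)$. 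Second, because $A>0$ and $A^T>0$, each of the linear maps $\B x\mapsto A\B x$ and $\B x\mapsto A^T\B x$ sends the whole cone into a subset of $\mathcal P_0$ of finite projective diameter $\Delta<\infty$, so by Birkhoff's theorem it is a strict $d_H$-contraction with ratio $\kappa=\tanh(\Delta/4)<1$. Composing, $T=UA^TUA$ is a strict $d_H$-contraction with some ratio $\rho<1$.

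Next I would note that, starting from any $\B x^{(0)}\in\mathcal P_0$, every iterate stays in $\mathcal P_0$: $A>0$ maps a positive finite vector to a positive finite vector and $U$ preserves $\mathcal P_0$, so all the $d_H$-distances below are finite. Banach's fixed point theorem on the complete normalization slice then shows that the rays $[\B x^{(k)}]$ converge geometrically to the unique $T$-invariant ray, which by the previous theorem is the ray through $\B x$. Set $\B y^{(k)}:=\B x^{(k)}/\norm{\B x^{(k)}}$ and $\B y:=\B x/\norm{\B x}$. Since $\B y$ is interior to $\mathcal P$, the metric $d_H$ and the Euclidean distance are bi-Lipschitz equivalent on a fixed compact neighbourhood of $\B y$, and since $d_H(\B y^{(k)},\B y)=d_H([\B x^{(k)}],[\B x])\le c_0\,\rho^{\,k}$ we obtain $\norm{\B y^{(k)}-\B y}\le C\,\rho^{\,k}$ for all large $k$.

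It then remains to control the scalars $t_k:=\norm{\B x^{(k)}}$, and here the decisive structural fact is that $T$ is positively homogeneous of degree $1$: $U$ is homogeneous of degree $-1$ while $A$ and $A^T$ are linear, so $T(c\B x)=c\,T\B x$ for every $c>0$; in particular $T\B y=\B y$ and $\norm{T\B y}=1$. From $\B x^{(k+1)}=T\B x^{(k)}=T(t_k\B y^{(k)})=t_k\,T\B y^{(k)}$ we read off $t_{k+1}/t_k=\norm{T\B y^{(k)}}$. Because $A\B y>0$, the operator $T=UA^TUA$ is smooth near $\B y$, hence Lipschitz there with some constant $L$, and therefore $|\,t_{k+1}/t_k-1\,|=|\,\norm{T\B y^{(k)}}-\norm{T\B y}\,|\le L\,\norm{\B y^{(k)}-\B y}\le LC\,\rho^{\,k}$. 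Consequently $\sum_k|\log(t_{k+1}/t_k)|<\infty$, so $\log t_k$ converges to a finite limit, $t_k\to t_\star>0$, and finally $\B x^{(k)}=t_k\B y^{(k)}\to t_\star\,\B y=\gamma\,\B x$ with $\gamma:=t_\star/\norm{\B x}>0$; the dependence of $\gamma$ on $\B x^{(0)}$ is manifest.

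The main obstacle --- and the only step that genuinely goes beyond the already-cited contraction property --- is exactly this passage from convergence in the Hilbert metric, which only resolves rays, to convergence of the vectors in $\mathbb R^n$. What makes it work is the conjunction of two items coming from the preceding theorems: $T$ is homogeneous of degree $1$, and the eigenvalue of $T$ at the fixed direction is \emph{exactly} $1$; together with the \emph{geometric} rate $\rho<1$ supplied by Birkhoff's theorem these force $t_{k+1}/t_k\to 1$ fast enough (summably, after taking logarithms) that $t_k$ converges rather than merely drifting --- mere convergence $t_{k+1}/t_k\to1$ would not suffice. A secondary technical point to keep honest is that the iterates remain uniformly away from $\partial\mathcal P$, so that the $d_H$-estimates and the local smoothness of $T$ are all legitimately available; this is guaranteed by the strict positivity of $A$.
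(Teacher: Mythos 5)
Your argument is correct and follows exactly the route the paper itself invokes (and defers to \cite{BPS,LN}): Birkhoff's theorem makes $A$ and $A^T$ strict Hilbert-metric contractions, $U$ is a $d_H$-isometry, and Banach's fixed point theorem on the complete normalization slice gives geometric convergence of the rays to the unique positive eigendirection. Your additional step --- using the degree-one homogeneity of $T$ and the fact that the eigenvalue at the fixed ray equals $1$ to show the scalars $t_k$ converge, upgrading ray convergence to convergence of the vectors themselves --- is precisely the detail the paper's one-line justification leaves implicit, and you have handled it correctly.
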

The  convergence  is linear and the rate depends on the second singular value of the  doubly stochastic matrix
$P=\mathcal D(S\B x) A \mathcal D(\B x)$. We have the following \cite{KN}.
\begin{theorem}\label{rate}
  Let $\B x\in \mathcal P_0$  denote the limit of the sequence $\{ \B x^{(k)} \}_{k\in \mathbb N}$  generated according to
  \eqref{skk}. Then the matrix $\Sigma=\mathcal D(S\B x) A \mathcal D(\B x)$ is doubly stochastic  and, moreover,
  if $\sigma_2$ is the second largest singular value of $\Sigma$ it holds
  \[
  \parallel   \B x^{(k+1)}- \B x \parallel_2 \leq \sigma_2^2\parallel   \B x^{(k)}- \B x \parallel_2 +
  o\left(\parallel   \B x^{(k)}- \B x \parallel_2\right), \quad k\geq 0.
  \]
  \end{theorem}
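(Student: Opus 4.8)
The plan is to relate the iteration's asymptotics to a perturbed power method for the Jacobian $J_T$ at the limit, and to show this Jacobian is similar to $\Sigma^{T}\Sigma$. I would begin with two structural facts about $T=UA^{T}UA$. It is positively homogeneous of degree one, $T(\gamma\B v)=\gamma\,T\B v$, so its positive fixed points form a ray; differentiating this relation in $\gamma$ at $\gamma=1$ gives $J_T(\B x)\B x=\B x$. Moreover $S(\gamma\B v)=\gamma^{-1}S\B v$, so $\mathcal D(S\B v)A\mathcal D(\B v)$ does not depend on which representative of the ray is used, and there is no ambiguity in forming $\Sigma$ at the limit. Double stochasticity of $\Sigma$ is then immediate from the two halves of $T\B x=\B x$: since $S\B x=1./(A\B x)$, $\Sigma\B e=\mathcal D(S\B x)(A\B x)=\B e$; and $\B x=UA^{T}S\B x$ says $A^{T}S\B x=1./\B x$, so $\Sigma^{T}\B e=\mathcal D(\B x)(A^{T}S\B x)=\B e$. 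Because $A>0$ we have $\Sigma>0$, hence $\Sigma^{T}\Sigma$ is a primitive doubly stochastic matrix whose eigenvalues are $\sigma_1^{2}\ge\cdots\ge\sigma_n^{2}\ge0$ with $\sigma_1^{2}=1$ simple, eigenvector $\B e$ (so $\Sigma^{T}\Sigma\B e=\B e$), and $\sigma_2^{2}<1$.

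Next I would compute the Jacobian by the chain rule for $T=U\circ A^{T}\circ U\circ A$ using $J_U(\B y)=-\mathcal D^{-2}(\B y)$, which gives $J_T(\B v)=\mathcal D^{-2}(A^{T}S\B v)\,A^{T}\mathcal D^{-2}(A\B v)\,A$; substituting $A^{T}S\B x=1./\B x$ and $A\B x=1./(S\B x)$ at the fixed point collapses this to
\[
J_T(\B x)=\mathcal D^{2}(\B x)\,A^{T}\mathcal D^{2}(S\B x)\,A=\mathcal D(\B x)\,\Sigma^{T}\Sigma\,\mathcal D^{-1}(\B x).
\]
So $J_T(\B x)$ is similar to the symmetric positive semidefinite matrix $\Sigma^{T}\Sigma$: its spectrum is $\{\sigma_1^{2},\dots,\sigma_n^{2}\}$, real and nonnegative, with $\sigma_1^{2}=1$ simple and eigenvector $\mathcal D(\B x)\B e=\B x$, and the complementary invariant subspace $W:=\mathcal D(\B x)\{\B v:\B e^{T}\B v=0\}$ carries a restriction of spectral radius $\sigma_2^{2}<1$.

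For the rate I would linearize. Since $T$ is smooth in a neighbourhood of $\B x$ (all intermediate quantities stay positive there), $\B e^{(k)}:=\B x^{(k)}-\B x$ obeys $\B e^{(k+1)}=J_T(\B x)\B e^{(k)}+\B r^{(k)}$ with $\|\B r^{(k)}\|_2=O(\|\B e^{(k)}\|_2^{2})$. Splitting $\B e^{(k)}=\beta_k\B x+\B f^{(k)}$ along the spectral projectors of $J_T(\B x)$, with $\B f^{(k)}\in W$, decouples the recursion into $\beta_{k+1}=\beta_k+O(\|\B e^{(k)}\|_2^{2})$ and $\B f^{(k+1)}=J_T(\B x)\B f^{(k)}+O(\|\B e^{(k)}\|_2^{2})$. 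By the already-established linear convergence, $\|\B e^{(k)}\|_2$ is summable, so summing the $\beta$-recursion from $k$ to $\infty$ (and using $\beta_k\to0$) gives $|\beta_k|=O\!\big(\|\B e^{(k)}\|_2\sum_{i\ge k}\|\B e^{(i)}\|_2\big)=o(\|\B e^{(k)}\|_2)$; thus the error is asymptotically confined to $W$ and $\|\B f^{(k)}\|_2=\|\B e^{(k)}\|_2(1+o(1))$. Finally, $J_T(\B x)|_W$ is diagonalizable with real nonnegative spectrum whose maximum $\sigma_2^{2}$ is attained on a single eigenspace $E_2$, so the perturbed power iteration for $\B f^{(k)}$ aligns $\B f^{(k)}/\|\B f^{(k)}\|_2$ with $E_2$ (or, for non-generic $\B x^{(0)}$, converges even faster), and on $E_2$ the map acts as the scalar $\sigma_2^{2}$; hence $\|\B f^{(k+1)}\|_2=\sigma_2^{2}\|\B f^{(k)}\|_2+o(\|\B f^{(k)}\|_2)$. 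Combining the last two facts yields $\|\B e^{(k+1)}\|_2=\sigma_2^{2}\|\B e^{(k)}\|_2+o(\|\B e^{(k)}\|_2)$, and enlarging the $o(\cdot)$ term over the finitely many initial indices makes the inequality hold for every $k\ge0$.

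The routine chain-rule computation and the appeal to the already-proved linear convergence are the easy parts; the main obstacle is handling the two ways in which $J_T(\B x)$ fails to behave like a contraction in the Euclidean norm. First, the fixed point is not isolated --- $1\in\operatorname{spec}J_T(\B x)$ --- so one must show the component of the error along the ray $\mathbb R_{+}\B x$ dies strictly faster than $\sigma_2^{2}$ per step; the homogeneity of $T$ (equivalently $\Sigma^{T}\Sigma\B e=\B e$) combined with the quadratic, not merely $o(\|\cdot\|)$, size of the linearization remainder is exactly what yields $|\beta_k|=O(\|\B e^{(k)}\|_2^{2})$. Second, $J_T(\B x)$ is only similar to a symmetric matrix, so $\|J_T(\B x)|_W\|_2$ may exceed $\sigma_2^{2}$; the sharp constant emerges only asymptotically, once $\B f^{(k)}$ has aligned with the subdominant eigenspace on which $J_T(\B x)$ acts by a scalar.
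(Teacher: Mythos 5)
Your argument is correct in substance, but note that the paper does not actually prove this theorem: it is stated with a citation to Knight's paper \cite{KN} and no proof is given, so there is no internal proof to compare against. Your write-up is therefore a genuine addition rather than an alternative route, and it fits the paper's own machinery well: the identity $J_T(\B x)\B x=\B x$ is the paper's Theorem \ref{equiv} (which you re-derive more slickly from degree-one homogeneity of $T$ via Euler's relation, versus the paper's direct computation), and the similarity $J_T(\B x)=\mathcal D(\B x)\,\Sigma^T\Sigma\,\mathcal D^{-1}(\B x)$ is exactly the observation the paper makes after Theorem \ref{norm} with $P=\Sigma$. The double-stochasticity part and the spectral identification of $\sigma_2^2$ as the subdominant eigenvalue of $J_T(\B x)$ are clean and complete. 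The one delicate point is the final step: since $J_T(\B x)|_W$ is only similar to a symmetric matrix, the one-step Euclidean bound with the sharp constant $\sigma_2^2$ does not follow from an operator-norm estimate, and you correctly flag that it requires asymptotic alignment of $\B f^{(k)}$ with an eigenspace of the restriction. You assert this alignment for the perturbed power iteration somewhat informally; to make it airtight you would either invoke Stewart's perturbed-power-method results \cite{Ste} (which the paper already uses elsewhere) or, more economically, pass to $\B g^{(k)}=\mathcal D^{-1}(\B x)\B f^{(k)}$, where $\Sigma^T\Sigma$ restricted to $\B e^{\perp}$ is symmetric with norm $\sigma_2^2$ and the one-step contraction $\|\B g^{(k+1)}\|_2\le\sigma_2^2\|\B g^{(k)}\|_2+O(\|\B g^{(k)}\|_2^2)$ is immediate --- at the cost of proving the stated inequality in the $\mathcal D^{-1}(\B x)$-weighted norm rather than the plain Euclidean one. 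Your handling of the neutral direction along the fixed ray, using the quadratic size of the remainder and summability of the errors to get $|\beta_k|=o(\|\B e^{(k)}\|_2)$, is the right idea and is the part most proofs of this result gloss over.
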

The convergence can  be very slow in the case of nearly decomposable matrices. The following definition is provided in
\cite{AF,Minc}. 
\begin{definition}
  For a given $\epsilon >0$, the  matrix $A\in \mathbb R^{n\times n}$ is
  $\epsilon$-nearly decomposable if  there exist $E\in [0,1]^{n\times n}$ and  a permutation matrix $P$ such that
  $PAP^T=\hat A + \epsilon E$ where $\hat A$ is block triangular with  square diagonal blocks.
\end{definition}
The relevance of nearly decomposable matrices for the study of  dynamic systems in economics 
has been examined   by Simon and Ando \cite{SA}. The role of near decomposability  in queuing and computer system applications
has been discussed in \cite{Cou}.   For a   general overview of the properties of nearly decomposable graphs and networks
with  applications in data science and information retrieval one can see \cite{nuio}.

\begin{example}\label{ex1}
  Let $A=\displaystyle\left[\begin{array}{cc}1-\epsilon  & \epsilon \\\epsilon  & 1-\epsilon
  \end{array}\right]$, $0<\epsilon<1$ be a doubly stochastic matrix. The vector $\B x=[1,1]^T$  provides  a solution of the SKK
  scaling problem. The  singular values of the matrix $A=\mathcal D(S\B x) A \mathcal D(\B x)$  satisfy
  $\sigma_1=1$ and $\sigma_2=|1-2\epsilon|$. For the matrix
  $A=\displaystyle\left[\begin{array}{cc}1 & \epsilon \\1 & 1
  \end{array}\right]$, $\epsilon>0$, the SKK iteration \eqref{skk} is convergent but  
  the number of iterations   grows  exponentially as $\epsilon$  becomes small.  In Table \ref{t1} we  show
  the number of iterations performed by Algorithm \ref{alg1} in Section \ref{sec3}
  applied to the matrix $A$ with
  the error   tolerance 
  $\tau=1.0e-8$.
  
\begin{table}[ht]
\caption{Number  of  SKK iterations $ItN$ for different values of $\epsilon =10^{-k}$}
\centering 
\begin{tabular}{|c| c| c| c|c|c|c|c|c|c|c|}
\hline
$k$ & 1 & 2& 3& 4& 5 & 6& 7& 8& 9& 10\\ 
\hline
$ItN$ & 16& 46& 132& 391& 1139& 3312& 9563& 27360& 77413& 216017\\
\hline
\end{tabular}
\label{t1}
\end{table}
\end{example}

The local dynamics of \eqref{skk} depend on the properties of the  Jacobian matrix evaluated at the
fixed point.  By using the chain rule for the composite function  we obtain that
\[
J_T(\B z)=J_{UA^TUA}(\B z)=J_U(A^TUA\B z)\cdot J_A^T(UA\B z)\cdot J_U(A\B z) \cdot J_A(\B z).
\]
Since $J_U(\B z)=-\diag^{-2}(\B z)$ we find that
\begin{equation}
  J_T(\B z)=\diag^2(T\B z)\cdot A^T \cdot \diag^2(S\B  z) A.
\end{equation}
The next result gives a lower bound for  the spectral radius of $J_T(\B z)$ for $\B z\in \mathcal P_0$.
\begin{theorem}\label{norm}
  For any   given  fixed $\B z\in \mathcal P_0$   the spectral radius of  $J_T(\B z)$ satisfies
  $\rho(J_T(\B z))\geq 1$.
\end{theorem}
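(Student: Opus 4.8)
The plan is to recognise $J_T(\B z)$ as similar to a matrix of the form $M^{T}M$ with $M$ column-stochastic, and then to invoke the elementary bound $\rho(M)\le\|M\|_2$. First I would record that, since $\B z\in\mathcal P_0$ and $A>0$, each of $A\B z$, $S\B z = UA\B z$, $A^{T}S\B z$ and $T\B z = UA^{T}UA\B z$ is finite and strictly positive; consequently no $+\infty$ entries occur, the diagonal matrices $\mathcal D(S\B z)$ and $\mathcal D(T\B z)$ are genuinely invertible, and the manipulations below are honest equalities rather than mere inequalities in the extended arithmetic.

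Starting from the formula $J_T(\B z)=\mathcal D^{2}(T\B z)\,A^{T}\,\mathcal D^{2}(S\B z)\,A$ derived just above, I would conjugate by $\mathcal D(T\B z)$:
\[
\mathcal D^{-1}(T\B z)\,J_T(\B z)\,\mathcal D(T\B z)
   =\bigl[\mathcal D(T\B z)A^{T}\mathcal D(S\B z)\bigr]\bigl[\mathcal D(S\B z)A\,\mathcal D(T\B z)\bigr]
   =M^{T}M,\qquad M:=\mathcal D(S\B z)\,A\,\mathcal D(T\B z).
\]
Since similar matrices have the same spectrum, $\rho(J_T(\B z))=\rho(M^{T}M)=\|M\|_2^{2}$, so it remains to prove $\|M\|_2\ge 1$.

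The crux is that $M$ is column-stochastic. Indeed $\B e^{T}M=(S\B z)^{T}A\,\mathcal D(T\B z)=(A^{T}S\B z)^{T}\mathcal D(T\B z)$, while applying $U$ to the defining relation $T\B z=U(A^{T}UA\B z)$ gives $A^{T}S\B z=A^{T}UA\B z = 1./(T\B z)$; hence
\[
\B e^{T}M=(1./(T\B z))^{T}\mathcal D(T\B z)=\B e^{T}.
\]
Thus $M>0$ has all column sums equal to $1$, so $M^{T}\B e=\B e$, whence $1$ is an eigenvalue of $M$ and $\rho(M)\ge 1$. Combining this with $\rho(M)\le\|M\|_2$ yields $\rho(J_T(\B z))=\|M\|_2^{2}\ge\rho(M)^{2}\ge 1$.

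I expect the only genuinely nontrivial step to be spotting the similarity to $M^{T}M$ with $M$ column-stochastic; the remaining verifications are routine bookkeeping with $U$, $S$ and $T$, the sole subtlety being the positivity/finiteness check in the first paragraph. As a sanity check, at the true fixed point $\B x$ one has $T\B x=\B x$, so $M$ collapses to $\mathcal D(S\B x)A\,\mathcal D(\B x)=\Sigma$, the doubly stochastic matrix of Theorem~\ref{rate}; then $\|\Sigma\|_2=\sigma_1=1$ and the bound is attained, consistently with the interpretation of \eqref{skk} near $\B x$ as a perturbed power iteration driven by $J_T(\B x)$.
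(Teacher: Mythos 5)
Your proof is correct and follows essentially the same route as the paper: you conjugate $J_T(\B z)$ by $\mathcal D(T\B z)$ to exhibit it as similar to $M^TM$ (the paper's $GG^T$ with $G=M^T$), and the column-stochasticity $\B e^TM=\B e^T$ is exactly the paper's identity $G\B e=\B e$, from which $\|M\|_2\geq 1$ follows. The only cosmetic differences are that you pass through $\rho(M)\leq\|M\|_2$ instead of bounding $\|G\|_2$ directly from $\|G\B e\|_2=\|\B e\|_2$, and that you add the (welcome) explicit finiteness/positivity check.
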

\begin{proof}
   Let us denote $G=\diag(T\B z)\cdot A^T \cdot \diag(S\B  z)$.   It holds
  \begin{align*}
    J_T(\B z) &=\diag^2(T\B z)\cdot A^T \cdot \diag^2(S\B  z) A\\
    &=\diag(T\B z) \cdot G \cdot G^T \diag^{-1}(T \B z), 
  \end{align*}
  and, hence $J_T(\B z)$ and  $G \cdot G^T$ are similar.
  Now  observe that
  \[
  G \B e=\diag(T\B z)\cdot A^T \cdot \diag(S\B  z)\B e=\diag(T\B z)\cdot A^TUA\B z=\B e.
  \]
  It follows that $\parallel G\parallel_2\geq 1$.  By using  the SVD of $G$ it is found that
  $\sigma_1(G)\geq 1$ and therefore  the spectral radius of  $J_T(\B z)$ satisfies
  $\rho(J_T(\B z))\geq 1$.  
\end{proof}
If $\B x =T\B x$, $\B x\in \mathcal P_0$,  then it is worth noting that
\[
J_T(\B x)=\diag^2(T\B x)\cdot A^T \cdot \diag^2(S\B  x) A=\diag^2(\B x)\cdot A^T \cdot \diag^2(S\B  x) A,
\]
and, hence, 
\[
J_T(\B x)=\diag(\B x) \cdot P^T P \diag^{-1}(\B x),
\]
where $P$ is introduced in  Theorem \ref{rate}. 
This means that  $J_T(\B x)$ and $F= P^T P$ are  similar and therefore the   eigenvalues of $J_T(\B x)$ are the squares of the singular values of $P$.  Since $A>0$  then it is irreducible and primitive
and the same holds for $P$ and a fortiori for $F$.
By the
Perron-Frobenius theorem  it follows that  the spectral radius of  $J_T(\B x)$ satisfies
$\rho( J_T(\B x))=1$ and  $\lambda=1$ is a simple  eigenvalue of $J_T(\B x)$
with a positive corresponding eigenvector.

A characterization of such an eigenvector can be derived by the following result.
\begin{theorem}\label{equiv}
  For each vector $\B z\in \mathcal P_0$ it holds
  \[
  T\B z=J_T(\B z)\cdot \B z.
  \]
  \end{theorem}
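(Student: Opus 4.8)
The plan is to give a direct proof that mirrors the chain-rule computation of $J_T$ already performed above, and to note that an alternative one-line argument is available via Euler's identity for positively homogeneous maps.

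First I would start from the explicit formula $J_T(\B z)=\diag^2(T\B z)\,A^T\diag^2(S\B z)\,A$ and multiply on the right by $\B z$, simplifying from the inside out. Writing $\B w=A\B z$, one has $S\B z=U\B w=1./\B w$, hence $\diag^2(S\B z)=\diag^{-2}(\B w)$ and
\[
\diag^2(S\B z)\,A\B z=\diag^{-2}(\B w)\,\B w=1./\B w=S\B z .
\]
Then writing $\B u=A^T S\B z$, one has $T\B z=U\B u=1./\B u$, hence $\diag^2(T\B z)=\diag^{-2}(\B u)$ and
\[
J_T(\B z)\,\B z=\diag^2(T\B z)\,A^T S\B z=\diag^{-2}(\B u)\,\B u=1./\B u=T\B z ,
\]
which is exactly the claimed identity.

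Alternatively I would observe that $T$ is positively homogeneous of degree one: applying $U(c\B x)=c^{-1}U\B x$ twice gives $T(c\B z)=c\,T\B z$ for every $c>0$. Differentiating this identity in $c$ and evaluating at $c=1$ is, by the chain rule, precisely Euler's relation $J_T(\B z)\,\B z=T\B z$. I do not expect any genuine obstacle here: the computation is routine, and the only point worth a remark is that the generalized arithmetic on $\bar{\mathbb R}$ plays no role, since the hypotheses $\B z\in\mathcal P_0$ and $A>0$ keep $\B w$, $\B u$, $S\B z$ and $T\B z$ finite and strictly positive, so the Jacobian formula and Euler's theorem apply in their classical form.
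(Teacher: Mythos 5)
Your main computation is correct and is essentially the paper's own proof: both simplify $J_T(\B z)\B z=\diag^2(T\B z)A^T\diag^2(S\B z)A\B z$ from the inside out, using $\diag^2(S\B z)A\B z=S\B z$ and then $\diag^2(T\B z)A^TS\B z=T\B z$. Your alternative observation — that $T$ is positively homogeneous of degree one because $U(c\B x)=c^{-1}U\B x$ is applied twice, so the identity is just Euler's relation — is also correct and does not appear in the paper; it is arguably the more illuminating argument, since it explains \emph{why} the identity holds and would survive any reformulation of $T$ as a composition of degree $\pm 1$ homogeneous maps, whereas the direct computation depends on the explicit formula for $J_T$. Your closing remark that $\B z\in\mathcal P_0$ and $A>0$ keep everything finite and positive, so the extended arithmetic is not needed, is a worthwhile clarification that the paper leaves implicit.
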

  \begin{proof}
    Let $\B z\in \mathbb R^n$, $\B z>\B 0$,  then we have
    \begin{align*}
   J_T(\B z)\cdot \B z 
   &=\diag^2(T\B z)\cdot A^T \cdot \diag^2(S\B  z) A \B z
   \\&=\diag^2(T\B z)\cdot A^T \cdot S\B z
   \\&=\diag^{-2}(A^T S \B z)\cdot A^T \cdot S\B z
   \\&=\diag^{-1}(A^T S \B z)\B e
   \\&=\diag(UA^T U A  \B z)\B e
   \\&= T\B z.
   \end{align*}
  \end{proof}
  This theorem implies that
  \[
  \B x\in \mathcal P_0, \ \B x=T\B x \quad \iff \quad \B x\in \mathcal P_0, \ \B x=J_T(\B x)\B x
  \]
  and therefore the eigenvector of $J_T(\B x)$  corresponding with the eigenvalue $1$ is exactly the  desired  solution of the
  SKK problem.  Furthermore, the SKK iteration \eqref{skk} can equivalently be written as
  \begin{equation}\label{skkm}
  \left\{\begin{array}{ll}
  \B x^{(0)}\in \mathcal P_0;\\
  \B x^{(k+1)}= T \B x^{(k)}=J_T(\B x^{(k)}) \B x^{(k)}, \quad k\geq 0\end{array}
  \right.
\end{equation}  
In principle one can accelerate the convergence    of this iteration without improving the efficiency of the  iterative method 
by replacing $T$ with the operator  $T_\ell=T\circ T \circ \cdots \circ T$, $T_1=T$,  generated from
the composition ($\ell$ times) of  $T$ for a certain $\ell\geq 1$.
The linearized form of the resulting  iteration  around the fixed point
$\B x=T \B x$, $\B x\in \mathcal P_0$, is 
 \begin{equation}\label{skkma1}
  \left\{\begin{array}{ll}
  \B x^{(0)}\in \mathcal P_0;\\
  \B x^{(k+1)}= J_T^\ell(\B x) \B x^{(k)}, \quad k\geq 0\end{array}
  \right.
 \end{equation}
 This is the power method applied to  the matrix $J_T^\ell(\B x)$ for the approximation of an eigenvector associated with the
 dominant eigenvalue $\lambda=1$.  A normalized  variant of \eqref{skkma1}   can be  more suited for numerical computations
 \begin{equation}\label{skkma1n}
\left\{\begin{array}{ll}
  \B x^{(0)}\in \mathcal P_0;\\
  \left\{\begin{array}{ll}
      \B v^{(k+1)}=J_T^\ell(\B x) \B x^{(k)},\\
      \B x^{(k+1)}=v^{(k+1)}/(\B e^T \B v^{(k+1)})
    \end{array}\right., \quad k\geq 0
\end{array}
  \right.
 \end{equation}
 Since  $\lambda=1$ is the  simple  dominant  eigenvalue of $J_T(\B x)$
with a positive corresponding eigenvector it is well known that  \eqref{skkma1n} generates sequences such that
 \[
 \lim_{k\rightarrow \infty}\B x^{(k)}=\B x/(\B e^T\B x), \quad
 \limsup_{k\rightarrow \infty}\parallel\B x^{(k)}-\B x/(\B e^T\B x)\parallel_2^{1/k}\leq \lambda_2^\ell=\sigma_2^{2\ell}, 
 \]
 where $0\leq \lambda_2=\sigma_2^2<1$ is the second largest eigenvalue of $J_T(\B x)$ and
 $\sigma_2$ denotes  the second largest singular value of $P$ defined as in Theorem \ref{rate}.
 For practical purposes  we introduce the following modified  adaptation of \eqref{skkma1n} called SKK$_\ell$ iteration: 
\begin{equation}\label{skkma1nm}
\left\{\begin{array}{ll}
  \B x^{(0)}\in \mathcal P_0;\\
  \left\{\begin{array}{ll}
      \B v^{(k+1)}=J_T^\ell(\B x^{(k)}) \B x^{(k)},\\
      \B x^{(k+1)}=v^{(k+1)}/(\B e^T \B v^{(k+1)})
    \end{array}\right., \quad k\geq 0
\end{array}
  \right.
 \end{equation}
For $\ell=1$ SKK$_1$ reduces to the scaled  customary SKK iteration.  Under suitable assumptions we can show that
SKK$_\ell$  generates a sequence converging to the desired fixed point.
\begin{theorem}
  Let $\{ \B x^{(k)}\}_k$  be the sequence generated by SKK$_\ell$ from a given initial guess $\B x^{(0)}\in \mathcal P_0$.
  Let $\B x\in \mathcal P_0$ be such that  $\B x=T\B x$ and $\B e^T\B x=1$. 
  Assume that:
  \begin{enumerate}
  \item $\exists \eta >0$ $\colon$ $J_T^\ell(\B x^{(k)})=J_T^\ell(\B x) +E_k$,  $\parallel E_k\parallel_2\leq \eta \sigma_2^{2\ell k}$, \ $k\geq 0$;
  \item    $\exists \gamma  >0$ $\colon$  $\parallel \prod_{k=0}^mJ_T^\ell(\B x^{(k)})\parallel_2\geq \gamma$, \ $m\geq 0$.
  \end{enumerate}
  Then  we have
  \[
  \lim_{k\rightarrow \infty} \B x^{(k)}=\B x,
  \]
  and
  \[
  \limsup_{k\rightarrow \infty} \parallel  \B x^{(k)}-\B x \parallel_2^{1/k}\leq \sigma_2^{2\ell}.
  \]
\end{theorem}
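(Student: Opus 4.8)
The statement is a convergence result for a perturbed power iteration, so the plan is to recognise SKK$_\ell$ as exactly that and then run a Stewart--type analysis \cite{Ste}; Assumptions~(1) and~(2) are precisely the two hypotheses such an analysis needs, namely a summable sequence of perturbations and a non--degeneracy (non--collapse) condition on the iterates.

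\emph{Setup.} I would put $M:=J_T^\ell(\B x)$ and $\mu:=\sigma_2^{2\ell}\in[0,1)$. From the discussion preceding Theorem~\ref{equiv}, $J_T(\B x)=\diag(\B x)\,F\,\diag^{-1}(\B x)$ with $F=P^TP$ symmetric, positive semidefinite, primitive and $F\B e=\B e$ (as $P$ from Theorem~\ref{rate} is doubly stochastic); hence $M$ is self--adjoint for the inner product $\langle \B u,\B v\rangle_\ast:=\B u^T\diag^{-2}(\B x)\B v$, its dominant eigenvalue $1$ is simple with right eigenvector $\B x$ and left eigenvector $\B y:=\tfrac1n\diag^{-1}(\B x)\B e>\B 0$ (normalised so $\B y^T\B x=1$), and the remaining eigenvalues have modulus $\le\mu$. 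With $\Pi:=\B x\B y^T$, the $\langle\cdot,\cdot\rangle_\ast$--orthogonal projector onto $\mathrm{span}(\B x)$, one has $M\Pi=\Pi M=\Pi$ and $\|M(I-\Pi)\|_\ast=\mu$. Writing $B_k:=J_T^\ell(\B x^{(k)})=M+E_k$ and $\beta_{k+1}:=\B e^TB_k\B x^{(k)}$, the SKK$_\ell$ recursion is $\B x^{(k+1)}=B_k\B x^{(k)}/\beta_{k+1}$ with $\|E_k\|_\ast\le c_\eta\mu^k$ by Assumption~(1). Since $A>0$ one checks $J_T(\B z)>0$ entrywise for every $\B z\in\mathcal P_0$; by induction all $\B x^{(k)}$ lie in $\mathcal P_0$, all $B_k$ and all partial products $P_k:=B_{k-1}\cdots B_0$ are positive and each $\beta_{k+1}>0$, so the iteration is well defined and $\B x^{(k)}=\B w^{(k)}/\|\B w^{(k)}\|_1$ with $\B w^{(k)}:=P_k\B x^{(0)}>\B 0$.

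\emph{Two recursions.} Next I would set $a_k:=\B y^T\B w^{(k)}>0$, $\B h^{(k)}:=(I-\Pi)\B w^{(k)}$ and work with the rescaled vectors $\widetilde{\B w}^{(k)}:=\B w^{(k)}/a_k=\B x+\widetilde{\B h}^{(k)}$ and the error size $r_k:=\|\widetilde{\B h}^{(k)}\|_\ast=\|\widetilde{\B w}^{(k)}-\B x\|_\ast$. Since $\widetilde{\B w}^{(k)}$ is a positive multiple of $\B x^{(k)}$ with $\B y^T\widetilde{\B w}^{(k)}=1$ and $\B y>\B 0$, we get $\|\widetilde{\B w}^{(k)}\|_1\le1/\min_iy_i$, hence $W:=\sup_k\|\widetilde{\B w}^{(k)}\|_\ast<\infty$ for free. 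Applying $M+E_k$ to $\B w^{(k)}$ and using $\B y^TM=\B y^T$ (left eigenvector) together with $M\Pi=\Pi M=\Pi$ gives
\[
a_{k+1}=a_k\bigl(1+\B y^TE_k\widetilde{\B w}^{(k)}\bigr),\qquad \B h^{(k+1)}=M\B h^{(k)}+(I-\Pi)E_k\B w^{(k)}.
\]
The first relation is a small multiplicative perturbation of a constant, $|\B y^TE_k\widetilde{\B w}^{(k)}|\le\|\B y\|_\ast c_\eta W\mu^k$, so the infinite products converge and $a_k\le A^\ast:=a_0\prod_{j\ge0}\bigl(1+\|\B y\|_\ast c_\eta W\mu^j\bigr)<\infty$. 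Dividing the second relation by $a_{k+1}$ and using $\|M(I-\Pi)\|_\ast=\mu$ yields
\[
r_{k+1}\le q_k\bigl(\mu r_k+c_\eta W\mu^k\bigr),\qquad q_k:=a_k/a_{k+1}.
\]

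\emph{Non--collapse and conclusion.} Assumption~(2) enters exactly here: since $\B w^{(k)}=P_k\B x^{(0)}>\B 0$, $\|\B w^{(k)}\|_1=\B e^TP_k\B x^{(0)}\ge(\min_jx^{(0)}_j)\sum_{i,j}(P_k)_{ij}\ge(\min_jx^{(0)}_j)\|P_k\|_2$ (the entrywise $\ell_1$ norm dominates $\|\cdot\|_F\ge\|\cdot\|_2$), whence $a_k=\B y^T\B w^{(k)}\ge(\min_iy_i)(\min_jx^{(0)}_j)\,\gamma=:\gamma_0>0$. With $\gamma_0\le a_k\le A^\ast$ the products of the $q_j$ telescope, $\prod_{j=0}^{k-1}q_j=a_0/a_k\le a_0/\gamma_0$ and $\prod_{i=j+1}^{k-1}q_i=a_{j+1}/a_k\le A^\ast/\gamma_0$, so unrolling the $r$--recursion gives $r_k\le\tfrac{a_0}{\gamma_0}\mu^k r_0+\tfrac{(A^\ast)^2}{\gamma_0^2}c_\eta W\sum_{j=0}^{k-1}\mu^{k-1}=O(k\mu^k)$. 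Hence $r_k\to0$ and $\limsup_k r_k^{1/k}\le\mu=\sigma_2^{2\ell}$; finally $\B x^{(k)}=\widetilde{\B w}^{(k)}/\|\widetilde{\B w}^{(k)}\|_1=(\B x+\widetilde{\B h}^{(k)})/\|\B x+\widetilde{\B h}^{(k)}\|_1\to\B x/\|\B x\|_1=\B x$ (because $\B e^T\B x=1$) and $\|\B x^{(k)}-\B x\|_2=O(r_k)=O(k\mu^k)$, which gives both claimed limits. The genuinely delicate point is that the single--step amplifications $q_k$ need not be below $1$, so the crude bound $r_{k+1}\le(q_k\mu)r_k+\cdots$ does not close by itself; what rescues the argument is that $\prod q_j$ telescopes to a ratio of extreme values of the bounded sequence $\{a_k\}$, and securing $\inf_ka_k>0$ — i.e. that the Perron direction is not asymptotically annihilated by the (possibly large) early perturbations $E_0,E_1,\dots$ — is exactly what Assumption~(2) provides; without a hypothesis of this kind the conclusion fails in general.
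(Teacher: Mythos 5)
Your proof is correct, and it reaches the conclusion by a genuinely different route from the paper. The paper's argument is a short reduction to Stewart's Theorem~4.1 on products of a matrix with perturbations \cite{Ste}: Assumption~(1) makes $\sum_k\parallel E_k\parallel_2$ finite, so that theorem yields $P_m=\prod_{k=0}^m J_T^\ell(\B x^{(k)})\rightarrow\B x\B z^T$ together with the rate $\limsup_m\parallel P_m-\B x\B z^T\parallel_2^{1/m}\leq\sigma_2^{2\ell}$; Assumption~(2) is invoked only to conclude $\B z\neq\B 0$ by continuity of the norm; and a final triangle-inequality computation transfers the convergence of $P_k$ to that of the normalized iterates. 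You instead reprove the relevant special case of Stewart's result from scratch, exploiting structure the general theorem cannot use: since $J_T(\B x)=\diag(\B x)\,F\,\diag^{-1}(\B x)$ with $F=P^TP$ as in the discussion preceding Theorem~\ref{equiv}, the limit matrix $M=J_T^\ell(\B x)$ is self-adjoint with nonnegative spectrum in the weighted inner product $\langle\B u,\B v\rangle_*=\B u^T\diag^{-2}(\B x)\B v$, so the spectral projector $\Pi=\B x\B y^T$ satisfies $\parallel M(I-\Pi)\parallel_*=\sigma_2^{2\ell}$ exactly, and your two scalar recursions for $a_k=\B y^TP_k\B x^{(0)}$ and $r_k$ close without any Jordan-block or $\epsilon$-inflation arguments. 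What your version buys is self-containedness, an explicit $O(k\mu^k)$ error bound rather than only a $\limsup$, and a transparent account of where Assumption~(2) enters: it pins $\inf_k a_k\geq\gamma_0>0$, i.e.\ it guarantees that the Perron component is never asymptotically annihilated by the early perturbations, which is exactly the role the paper assigns to it when deducing $\B z\neq\B 0$. What the paper's version buys is brevity and applicability without the symmetrizability of $J_T(\B x)$. Your conversion of Assumption~(2) into the lower bound $a_k\geq\gamma_0$ via $\B e^TP_k\B e\geq\parallel P_k\parallel_F\geq\parallel P_k\parallel_2$ is a detail the paper leaves implicit, and it is correct.
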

\begin{proof}
  Since $\sum_{k=0}^{\infty}\parallel E_k\parallel_2<\infty$ from Theorem 4.1 in \cite{Ste} we obtain that
  the  matrix sequence  $P_m=\prod_{k=0}^mJ_T^\ell(\B x^{(k)})$  is such that
  \[
 \lim_{m\rightarrow \infty} P_m=\B x \B z^T , \quad \B z\in \mathcal P.
  \]
  From  Property 2 in view of the continuity of the norm it follows that $\B z\neq \B 0$  and this implies the  convergence of
  $\{ \B x^{(k)}\}_k$.
  About the  rate of convergence we  observe that
  \begin{align*}
  \parallel
  \B x^{(k+1)}-\B x\parallel_2&=\parallel
  \frac{P_k\B x^{(0)}}{\B e^TP_k\B x^{(0)}}-\B x \frac{\B z^T \B x^{(0)}}{\B z^T\B  x^{(0)}}  \parallel_2\\
  &\leq  \parallel
  \frac{P_k\B x^{(0)}}{\B z^T \B x^{(0)}}-\B x \frac{\B z^T \B x^{(0)}}{\B z^T \B x^{(0)}} \parallel_2 +
  \parallel
  \frac{P_k\B x^{(0)}}{\B e^T P_k\B x^{(0)}}-\frac{P_k\B x^{(0)}}{\B z^T \B x^{(0)}} \parallel_2 \\
  &\leq \frac{\parallel (P_k  -\B x \B z^T) \B x^{(0)})\parallel_2}{|{\B z^T \B x^{(0)}}|}
  + \parallel P_k\B x^{(0)}\parallel_2
  \left|\frac{\B e^T( \B x \B z^T - P_k\B) x^{(0)} }{\B e^T P_k\B x^{(0)} \B z^T \B x^{(0)}}\right|
  \end{align*}
  which says that $\B x^{(k)}$ approaches $\B x$ as fast as $P_k$ tends to $\B x \B z^T$.
  Again using Theorem 4.1 in \cite{Ste}  under our assumptions there follows that 
   \[
  \limsup_{k\rightarrow \infty} \parallel P_k-\B x \B z^T   \parallel_2^{1/k}\leq \sigma_2^{2\ell}=\lambda_2^\ell.
  \]
  which concludes the proof.
\end{proof}
This theorem shows that in our model the speed of convergence increases as $\ell$  increases.
Also, notice that  for any $\B z\in \mathcal P_0$ the  matrix $J_T(\B z)$ is primitive and  irreducible and therefore
by the Perron-Frobenius theorem its spectral radius is a dominant eigenvalue with a corresponding positive eigenvector.
From Theorem \ref{norm} this eigenvalue is greater than or equal to 1.  It follows that for large $\ell$   the iterate
$\B  x^{(k+1)}$
provides  an approximation of the  positive dominant eigenvector of $J_T(\B  x^{(k)})$. 
This  fact suggests  to consider SKK$_{\infty}$  as an effective method for approximating the limit vector $\B x$. The method
performs as an inner-outer procedure.
In the inner phase  given the current approximation  $\B x^{(k)}$ of $\B x$ we apply the   Power Method 
 \begin{equation}\label{skkma1nnn}
\left\{\begin{array}{ll}
  \B v^{(0)}=\B x^{(k)};\\
  \left\{\begin{array}{ll}
      \B z^{(k+1)}=J_T(\B x^{(k)} ) \B v^{(k)},\\
      \B v^{(k+1)}=\B z^{(k+1)}/(\B e^T \B z^{(k+1)})
  \end{array}\right., \quad k\geq 0
\end{array}
  \right.
 \end{equation} 
 until convergence  to find the new approximation  $\B x^{(k+1)}= \B v^{(\hat k+1)}$.
 Numerically this  latter vector  solves
 \begin{equation}\label{mat_eig}
 J_T(\B x^{(k)} )\B x^{(k+1)} = \theta_k\B x^{(k+1)} , \ \theta_k=\rho( J_T(\B x^{(k)} )), \  \B e^T \B x^{(k+1)}=1.
 \end{equation}
 Ideally,  $\theta_k$ would  converge from above to $1$ as well as   the corresponding  positive
 eigenvector $\B x^{(k+1)}$ would  approach  $\B x$; also, the convergence  of the outer iteration
 should  be superlinear.

\begin{example}\label{ex2}
   As in Example \ref{ex1} let 
  $A=\displaystyle\left[\begin{array}{cc}1 & \epsilon \\1 & 1
     \end{array}\right]$ with $\epsilon=1.0e-8$. In Figure 1a  e 1b we  illustrate the
   convergence history  of  iteration \eqref{mat_eig} applied to $A$ with  starting guess
   $\B x^{(0)}=\B v^{(0)}=\left[1/2, 1/2\right]^T$.  The iterative scheme stops after 13 steps. 
    The dominant eigenpair $(\lambda_k, \B v^{(k+1)})$  is computed by using the  function {\tt eig}  of MatLab. 
    In Figure 1b we  show   the distance  between two consecutive  normalized eigenvectors
    measured in the  Hilbert metric $
    d_H(\B u, \B v)= \max_{i,j}\log\left(\displaystyle\frac{u_iv_j}{v_i u_j}\right), \ \forall \B u,\B v \in \mathcal P_0.$
   \begin{figure}
  \begin{subfigure}[b]{0.4\textwidth}
    \includegraphics[width=\textwidth]{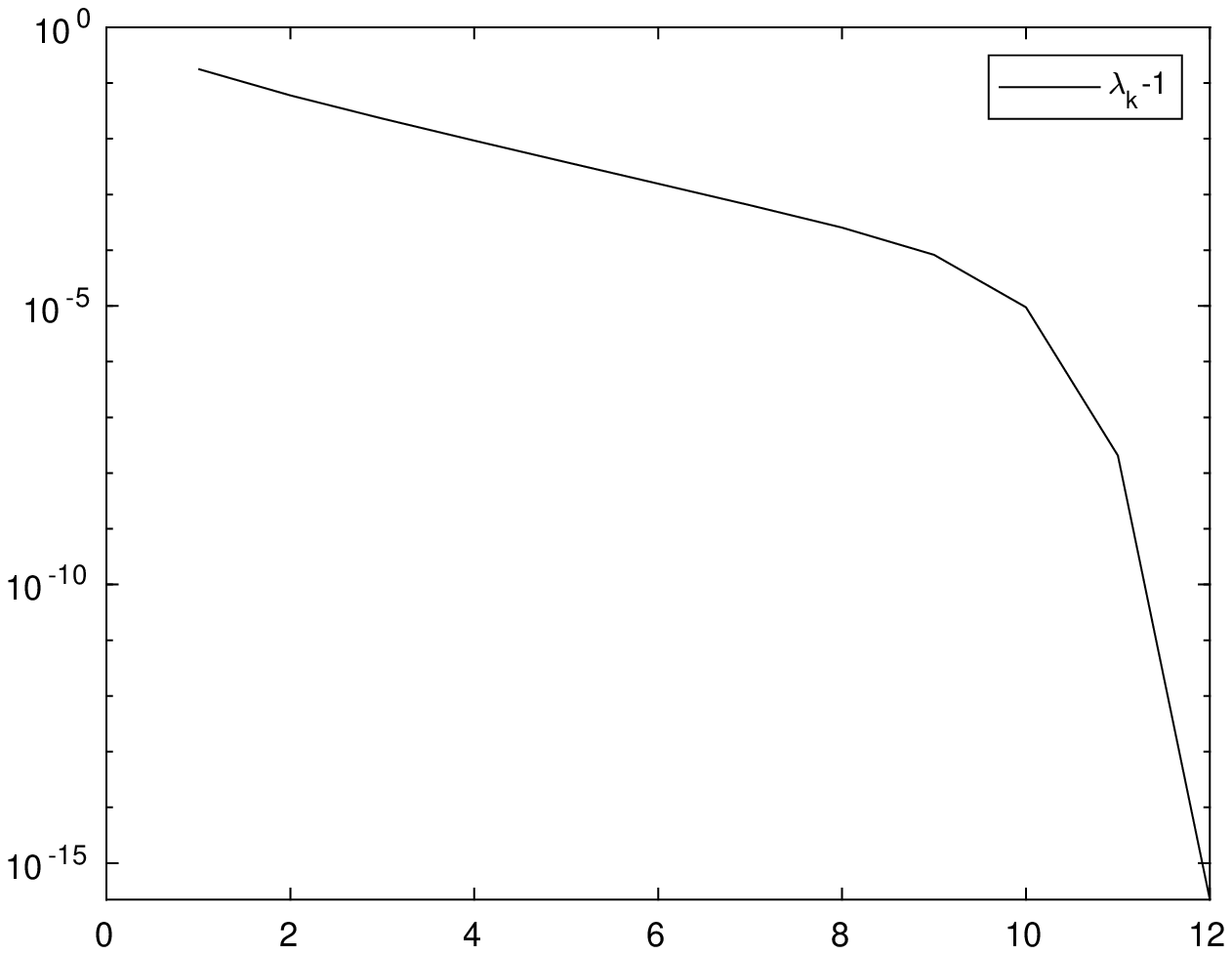}
    \caption{Figure 1a}
  \end{subfigure}
  \hfill
  \begin{subfigure}[b]{0.4\textwidth}
    \includegraphics[width=\textwidth]{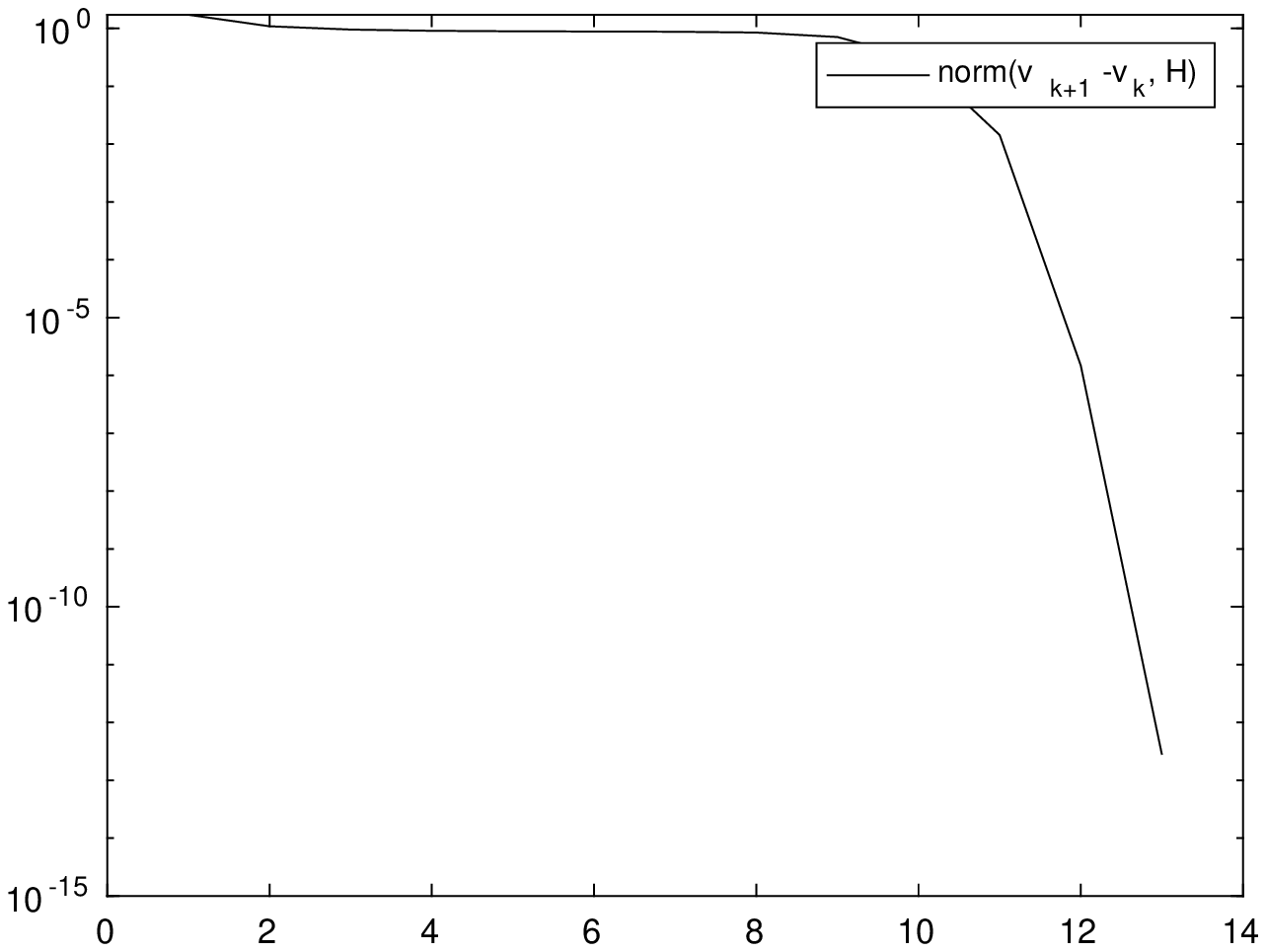}
    \caption{Figure 1b}
  \end{subfigure}
 \vspace{-0.5cm}  
   \end{figure} 
\end{example}
In principle  the matrix eigenvalue problem \eqref{mat_eig} can be solved by using
 any reliable method.   For large sparse matrices  in the case
 of clustered eigenvalues the convergence  of the inner iteration 
 can be greatly improved by  considering  variants  of the power method
 based on the Arnoldi/Krylov process  for approximating a few largest eigenvalues of the
 matrix.  In the next section the effectiveness and robustness of  these
 methods are evaluated by numerical experiments.

 \section{Experimental Setup}\label{sec3}
 We have tested the algorithms presented  above in a numerical environment using
 MatLab R2018b on a PC with 
Intel Core i7-4790 processor. The first method  to be considered is
 the  scaled SKK iteration implemented by  Algorithm \ref{alg1}. 
\begin{algorithm}[ht]
  \caption{Scaled SKK iteration}\label{alg1}
  \hspace*{\algorithmicindent} \textbf{Input:} $A\in \mathbb R^{n\times n}$, $A\geq 0$ and  a tolerance $\tau>0$ \\
 \hspace*{\algorithmicindent} \textbf{Output:} $\B x$ such that $\B x=T\B x$, $\B x \geq \B 0$, $\tt{sum}(\B x)=1$
\begin{algorithmic}[1]
\State \textbf{Function}  SKK$(A,\tau)$
\State $\B x=\tt{ones}(n,1)/n$;
\State $\textit{err}=inf$;
\While {$\textit{err}>\tau$}
\State $\B z=T\B x$;
\State $s=\tt{sum}(z)$;
\State $z=z/s$;
\State $\textit{err}=\tt{norm}(\B z-\B x)$;
\State $\B x=\B z$;
\EndWhile
\State \textbf{EndFunction}
\end{algorithmic}
\end{algorithm}

Based on the results of the previous section we propose to  exploit the properties of  either power method and
Arnoldi-type iterations  for computing the SKK vector.  The resulting schemes performs as follows: 
\begin{algorithm}[ht]
  \caption{Arnoldi-type  method}\label{alg2}
  \hspace*{\algorithmicindent} \textbf{Input:} $A\in \mathbb R^{n\times n}$, $A\geq 0$ and  given  tolerances $\tau>0$ \\
  \hspace*{\algorithmicindent} \textbf{Output:}  $\B x$ such that $\B x=T\B x$, $\B x \geq \B 0$, $\tt{sum}(\B x)=1$
\begin{algorithmic}[1]
\State \textbf{Function}  Arnoldi\_SKK$(A,\tau)$
\State $\B x=\tt{ones}(n,1)/n$;
\State $\textit{err}=inf$;
\While {$\textit{err}>\tau$}
\State $\left[\lambda,\B z\right]=\textit{FDE}(J_T(x), \B x, \tau)$;
\State $s=\tt{sum}(z)$;
\State $z=z/s$;
\State $\textit{err}=\tt{norm}(T\B z-\B z)$;
\State $\B x=\B z$; 
\EndWhile
\State \textbf{EndFunction}
\end{algorithmic}
\end{algorithm}

Algorithm \ref{alg2} makes use of an internal function $\textit{FDE}(J_T(x), \B x, \tau)$  for ``finding the dominant eigenpair''
of $J_T(x)$ at a prescribed tolerance depending on the value of $\tau$. If  $\textit{FDE}$ implements the power method then
Algorithm \ref{alg2} reduces to the SKK$_\infty$ iterative method.  However,
when the  largest eigenvalues of  $J_T(x)$ are clustered  the power method will perform poorly. In this case 
the performance  of  the eigensolver  can be improved by approximating  a few largest eigenvalues simultaneously  based
on Arnoldi methods.  In particular,
it  has been noted that  the orthogonalization of Arnoldi process achieves effective separation of eigenvectors \cite{YYN}.

The performances of the two algorithms have been evaluated and compared on sparse matrices.  It is worth pointing out that
MatLab  implements IEEE arithmetic and therefore, differently from the convention assumed at the beginning  of Section \ref{sec2},
 we find that $0\star(+Inf)=NaN$.  In some exceptional cases this discrepancy  can produce numerical difficulties and wrong results. 
Nevertheless, we have preferred  to avoid the redifinition of the multiplication operation  by  presenting tests
that are unaffected by such issue.
In our first set of  problems we  compute an approximation of the dominant eigenpair of $J_T(x)$ by using the
MatLab function {\tt eigs}  which implements an implicitly restarted Arnoldi method. The input sequence of {\tt eigs} is
given as

{{\scriptsize{
\begin{verbatim}
[V,D]=eigs(@(w)D2*AFUNT(D1*AFUN(w)),length(A),1,'largestabs','StartVector',x);
\end{verbatim}
}}}

where $AFUN(\B w)$  and $AFUNT(\B w)$   are functions that compute the product $A\B w$ and $A^T \B w$, respectively,
where  $A$ is stored in a sparse format. 
The test suite  consists of  the following matrices with entries 0 or 1 only:
\begin{enumerate}[i]
\item HB/can\_1072 of size $n=1072$ from the Harwell-Boeing collection;
\item SNAP/email-Eu-core of size $n=1005$ from the 
  SNAP (Stanford Network Analysis Platform) large network dataset collection;
  \item SNAP/Oregon-1 of  size $n=11492$  from the 
    SNAP  collection;
    \item SNAP/wiki-topcats matrix from the 
      SNAP  collection.  The original matrix has size $n=1791489$ but in order  to avoid paging issues  we
      consider here  its leading principal submatrix of order $n=32768$;
\end{enumerate}
The spy plots of these matrices are shown in Figure \ref{fig:sp}. 
\begin{figure}
  \begin{subfigure}[b]{0.4\textwidth}
    \includegraphics[width=\textwidth]{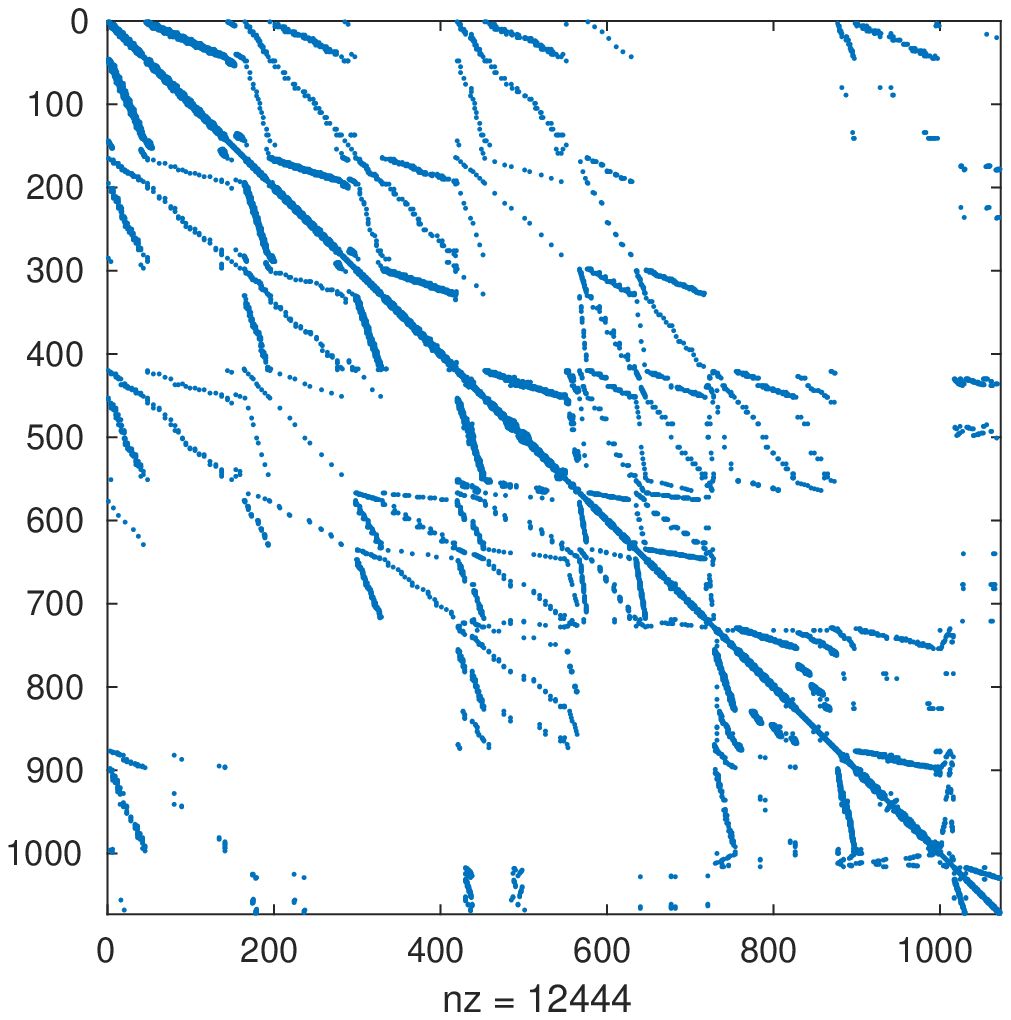}
    \caption{HB/can\_1072}
  \end{subfigure}
  \begin{subfigure}[b]{0.4\textwidth}
    \includegraphics[width=\textwidth]{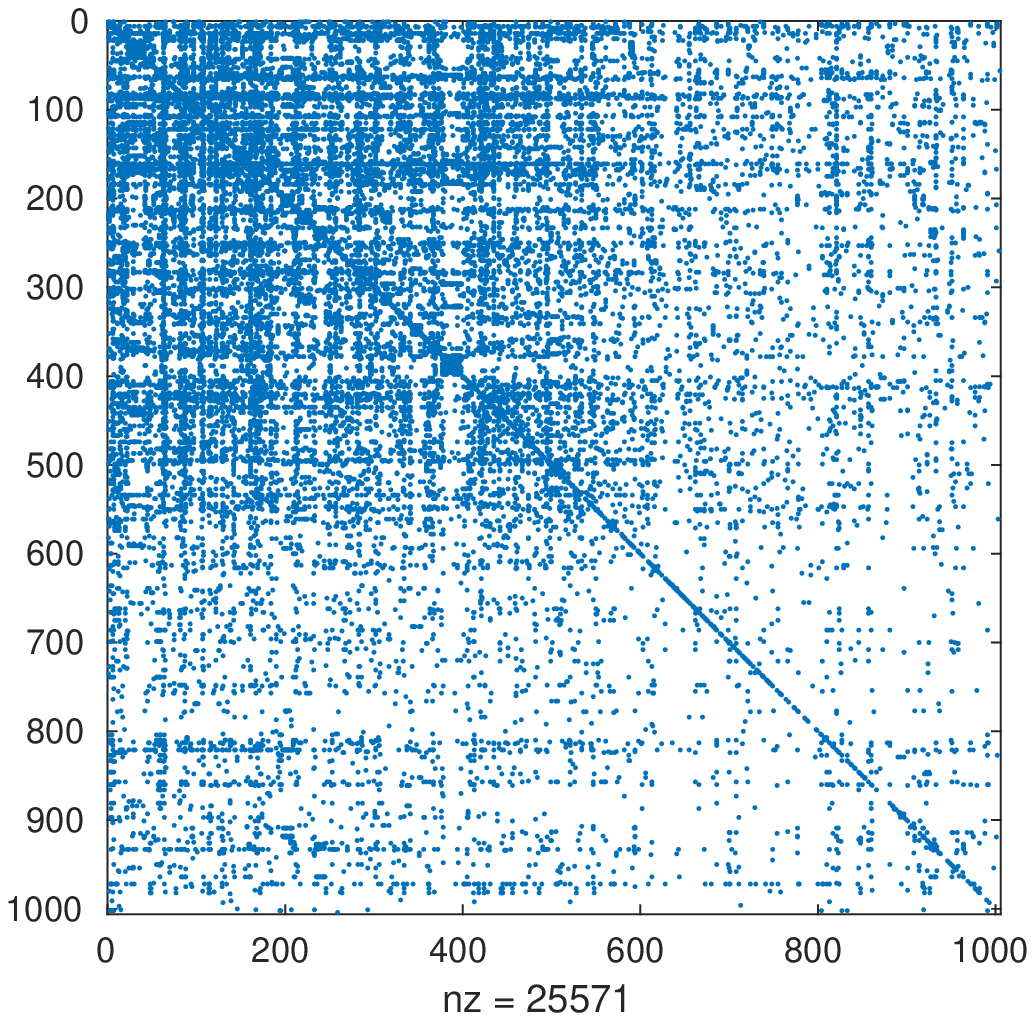}
    \caption{SNAP/email-Eu-core}
  \end{subfigure}
  \begin{subfigure}[b]{0.4\textwidth}
    \includegraphics[width=\textwidth]{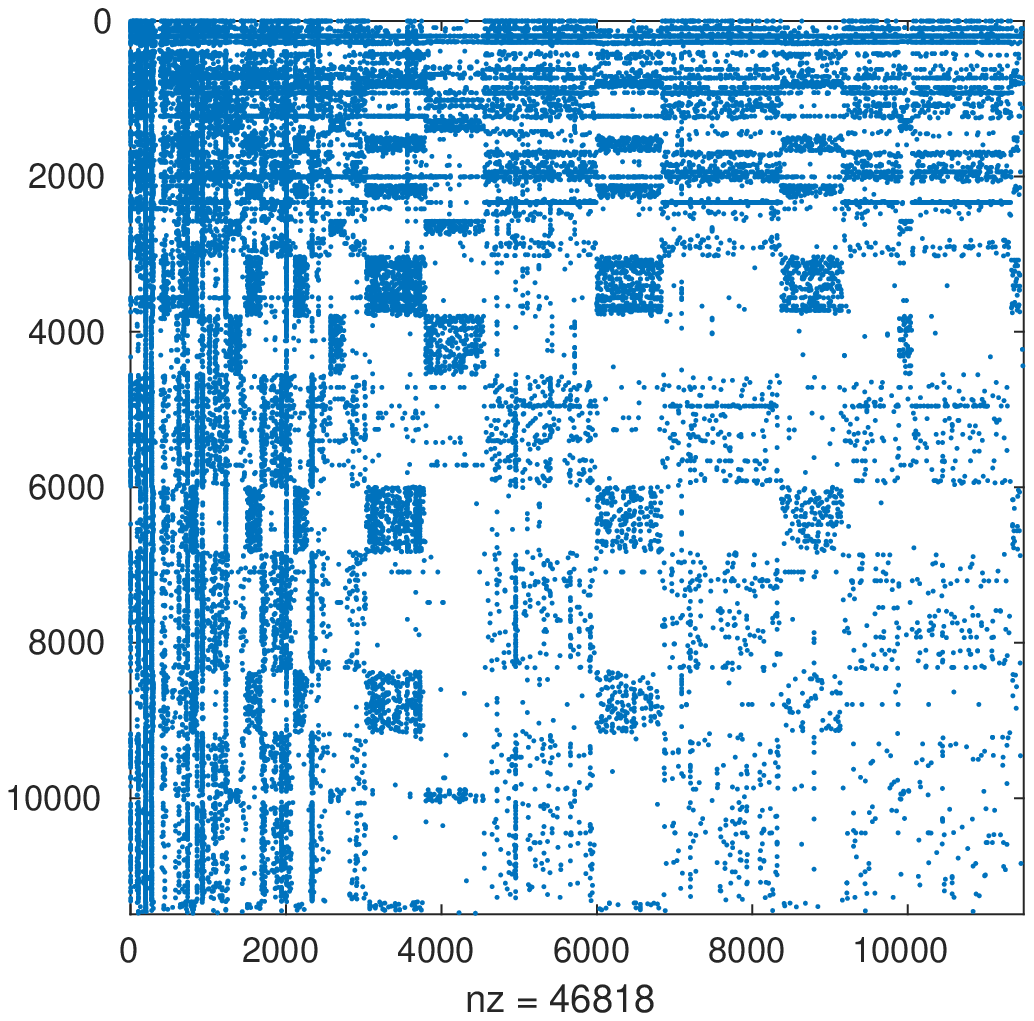}
    \caption{SNAP/Oregon-1}
  \end{subfigure}
  \hfill
  \begin{subfigure}[b]{0.4\textwidth}
    \includegraphics[width=\textwidth]{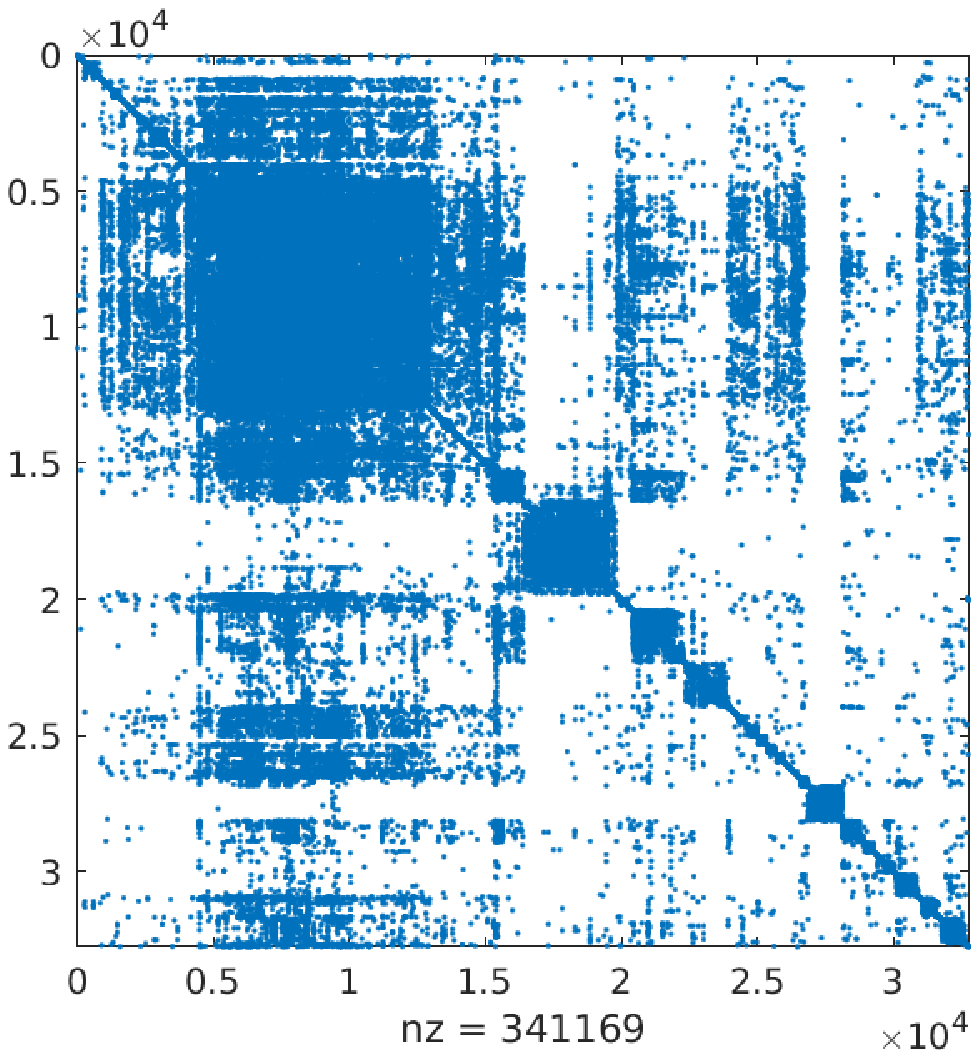}
    \caption{SNAP/wiki-topcats}
  \end{subfigure}
\caption{Spy Plots}  
\label{fig:sp}  
\end{figure}
The matrix HB/can\_1072 is sparse and irreducible.  The  (scaled) SKK iteration is convergent.  In Table \ref{t_ex_1}
we compare the computing times of Algorithm \ref{alg1} and \ref{alg2} for  different values of  $\tau$.
All  times are in seconds and  averaged  over 10 runs. 
  
\begin{table}[ht]
\caption{Computing times of Algorithm  \ref{alg1} and \ref{alg2} applied to HB/can\_1072 for different values of $\tau$}
\centering 
\begin{tabular}{|c| c| c| c|c|c|}
\hline
$\tau$ & 1.0e-6 & 1.0e-8& 1.0e-10& 1.0e-12& 1.0e-14\\ 
\hline
Alg1 &0.00296& 0.0147& 0.0315& 0.0484& 0.0657\\
\hline
Alg2 & 0.0151& 0.0213& 0.0213& 0.0253& 0.0254\\
\hline
\end{tabular}
\label{t_ex_1}
\end{table}  

As suggested at the beginning of the previous section the approach pursued by Algorithm \ref{alg2}  exhibits a
convergent behavior under the  same assumptions as the SKK iteration.  Moreover, for low levels of accuracy the cost of the
eigensolver is dominant whereas  Algorithm \ref{alg2} becomes  faster than Algorithm \ref{alg1} as $\tau$ decreases.

The remaining  matrices (ii), (iii) and  (iv)  from the
SNAP collections are more challenging due to the occurrence of clustered eigenvalues around 1  of
$J_T(\B x)$ where $\B x\in \mathcal P_0$ is a  fixed point of $T$.  According to \cite{KN}
in order to carry out  the approximation  of this vector efficiently we consider
perturbations of the  input matrix $A$
of the form
\[
\widetilde A=A + \gamma \B e\B e^T, \quad \B e=\left[1, \ldots, 1\right]^T, \quad \gamma>0, 
\]
for  decreasing values $\gamma_i$, $1\leq i\leq K$,  of $\gamma$. The approach resembles
the customary  strategy  employed for
solving the  PageRanking problem.   In the next tables \ref{t_ex_2},  \ref{t_ex_3} and
\ref{t_ex_4}  we   report  the computing times of Algorithm  \ref{alg1} and \ref{alg2}.
When   $\gamma=\gamma_1$   both algorithms start  with $\B x=\B e/n$  whereas  for $i>1$ the
starting vector is  given by the  solution  computed at  the previous step with $\gamma=\gamma_{i-1}$.
In all experiments the tolerance  was set at $\tau=1.0e-12$.

\begin{table}[ht]
\caption{Computing times of Algorithm  \ref{alg1} and \ref{alg2} applied to  SNAP/email-Eu-core for different values of $\gamma$}
\centering 
\begin{tabular}{|c| c| c| c|c|c|c|c|}
\hline
$\gamma$ & 1.0e-2 & 1.0e-4& 1.0e-6& 1.0e-8& 1.0e-10& 1.0e-12&1.0e-14\\ 
\hline
Alg1 &0.004& 0.009& 0.04& 0.34& 2.84 &23.24 & 185.54\\
\hline
Alg2 & 0.01& 0.02& 0.12& 0.18& 0.42 & 0.73& 1.16\\
\hline
\end{tabular}
\label{t_ex_2}
\end{table}

\begin{table}[ht]
\caption{Computing times of Algorithm  \ref{alg1} and \ref{alg2} applied to SNAP/Oregon-1  for different values of $\gamma$}
\centering 
\begin{tabular}{|c| c| c| c|c|c|c|c|}
\hline
$\gamma$ & 1.0e-2 & 1.0e-4& 1.0e-6& 1.0e-8& 1.0e-10 &1.0e-12& 1.0e-14\\ 
\hline
Alg1 &0.006& 0.008& 0.04& 0.32& 2.66 &21.86 & 174.27\\
\hline
Alg2 & 0.04& 0.06& 0.143& 0.7& 2.44 & 6.81& 12.28\\
\hline
\end{tabular}
\label{t_ex_3}
\end{table}

\begin{table}[ht]
\caption{Computing times of Algorithm  \ref{alg1} and \ref{alg2} applied to SNAP/wiki-topcats for different values of $\gamma$}
\centering 
\begin{tabular}{|c| c| c| c|c|c|c|c|}
\hline
$\gamma$ & 1.0e-2 & 1.0e-4& 1.0e-6& 1.0e-8& 1.0e-10 &1.0e-12&1.0e-14\\ 
\hline
Alg1 &0.02& 0.02& 0.18&0.69& 5.96&46.26&365.56 \\
\hline
Alg2 & 0.09& 0.2& 0.78&3.15& 11.59 &38.43 & 91.07 \\
\hline
\end{tabular}
\label{t_ex_4}
\end{table}

We observe that Algorithm \ref{alg2} outperforms Algorithm \ref{alg1}  for sufficiently small values of $\gamma$  when the perturbed matrix is close  to the original web link graph.

The second  set of test problems consists of matrices   which can be reduced
by permutation of rows and columns to block  triangular form.
The  reduction of the adjacency matrix of a graph in a block triangular form
is  related with the Dulmage-Mendelsohn decomposition \cite{DM}, which is a canonical decomposition
of a bipartite graph based on the notion of matching. The SKK iteration applied to block triangular matrices can not converge.
Depending on the number of  blocks  the scalar Arnoldi method employed  by the function {\tt eigs} can also performs poorly.
In this situation it can be recommended the use of a block  Arnoldi-based eigensolver which using  a set of starting vectors
is  able to compute multiple or clustered eigenvalues more efficiently than an unblocked routine.   In our experiments we
consider the  function {\tt ahbeigs} \cite{BJ}  which implements
a block Arnoldi method for computing a few eigenvalues of sparse matrices. Block methods can suffer from the occurrence of complex
eigenpairs.   Therefore, 
based on the proof of Theorem \ref{norm}
the method is applied to the symmetric matrix $G\cdot G^T$, $G=\diag(T\B x)\cdot A^T \cdot \diag(S\B  x)$
which is similar to $J_T(\B x)$.
The input sequence of {\tt ahbeigs} is
given as

{\scriptsize{
\begin{verbatim}
OPTS.sigma='LM';OPTS.k=m;OPTS.V0=R0;[V,D]=ahbeigs('afuncsym', n, speye(n),  OPTS)
\end{verbatim}
}}

where $n$ is the size of the matrix $A$, $m$ is the number of desired eigenvalues, $R0\in \mathbb R^{n\times m}$ is the set
of starting vectors and  {\tt 'afuncsym'}  denotes a function 
that computes the product  of $G\cdot G^T$ by a vector   where  $A$ is stored in a sparse format. 

For numerical testing we consider the following  matrices:
\begin{enumerate}
 \item the adjacency matrix  $A_{jazz}\in \mathbb R^{198\times 198}$
constructed from a
collaboration network between Jazz musicians.  Each node is a Jazz musician and an edge denotes that
two musicians have played together in a band.  The data was collected in 2003 \cite{jazz}.
The  MatLab command {\tt dmperm}  applied to the  jazz matrix    computes its
Dulmage-Mendelsohn decomposition.  It is found that the  permuted matrix  is
block triangular with 11 diagonal blocks;
\item  the matrix $A_{mbeause}\in \mathbb R^{496\times 496}$ generated by taking the absolute value of the
  matrix HB/mbeause  from the  the Harwell-Boeing collection. The original matrix  is  derived from
  an economic model which reveals several communities. This structure is  maintained in the modified matrix.
  The  MatLab command {\tt dmperm}  applied to  $A_{mbeause}$ returns a permuted matrix with 28 diagonal blocks. 
\end{enumerate}

In Figure \ref{fig2:sp}  we illustrate the spy plots of  the input matrices and their permuted versions.  
\begin{figure}
  \begin{subfigure}[b]{0.4\textwidth}
    \includegraphics[width=\textwidth]{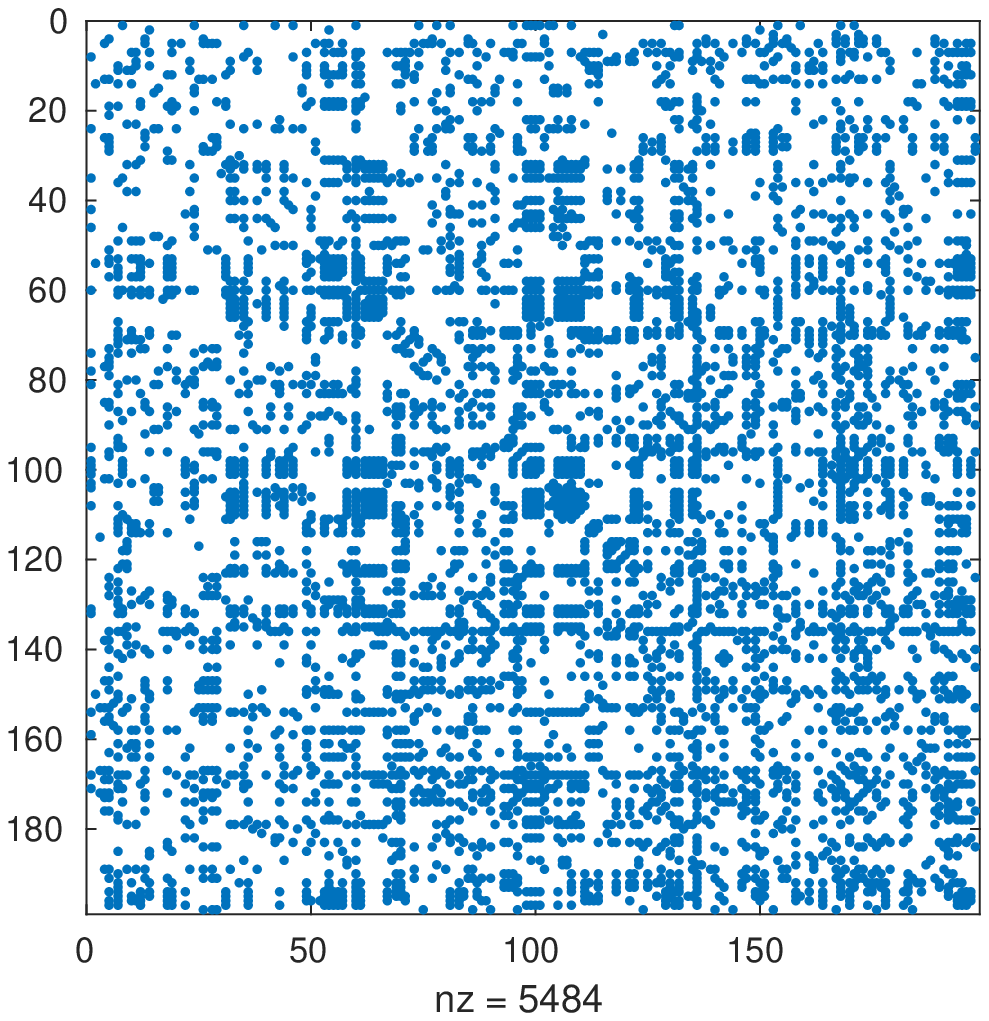}
    \caption{$A_{jazz}$}
  \end{subfigure}
  \begin{subfigure}[b]{0.4\textwidth}
    \includegraphics[width=\textwidth]{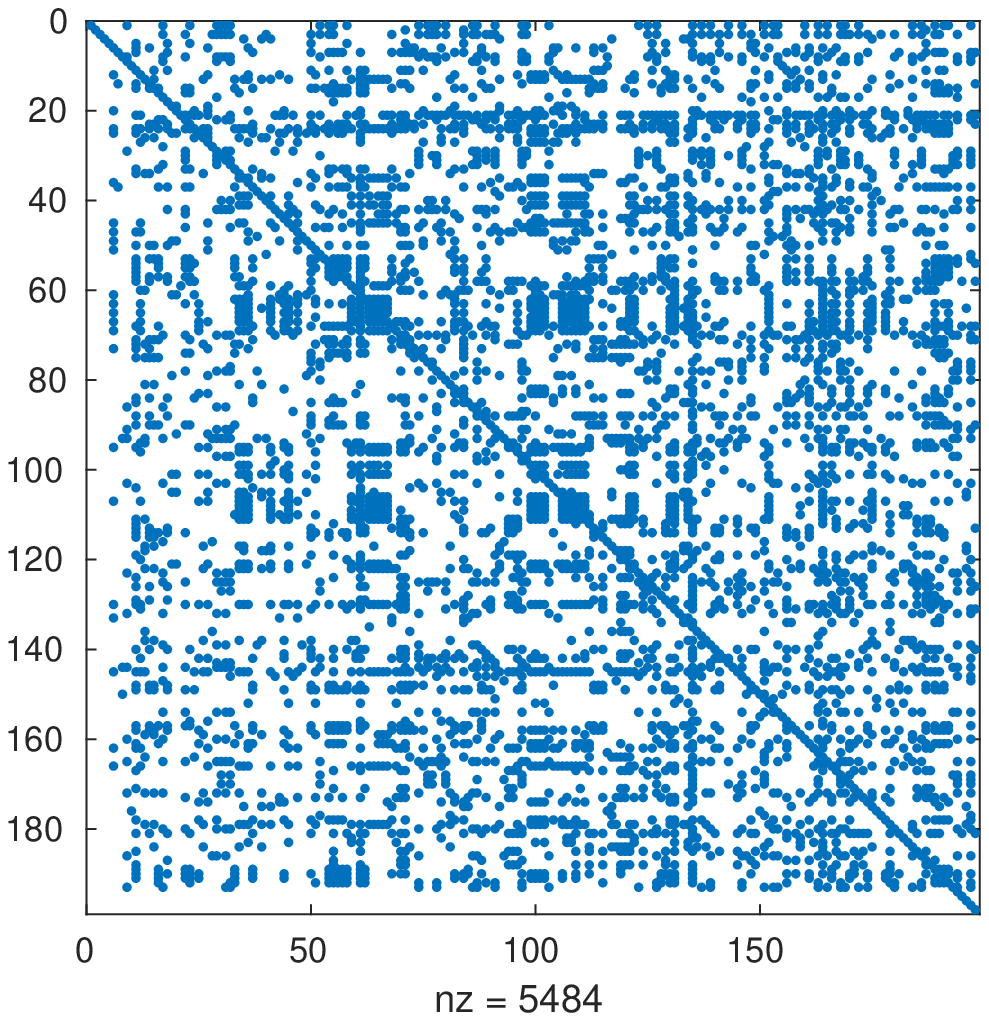}
    \caption{Permuted $A_{jazz}$}
  \end{subfigure}
  \begin{subfigure}[b]{0.4\textwidth}
    \includegraphics[width=\textwidth]{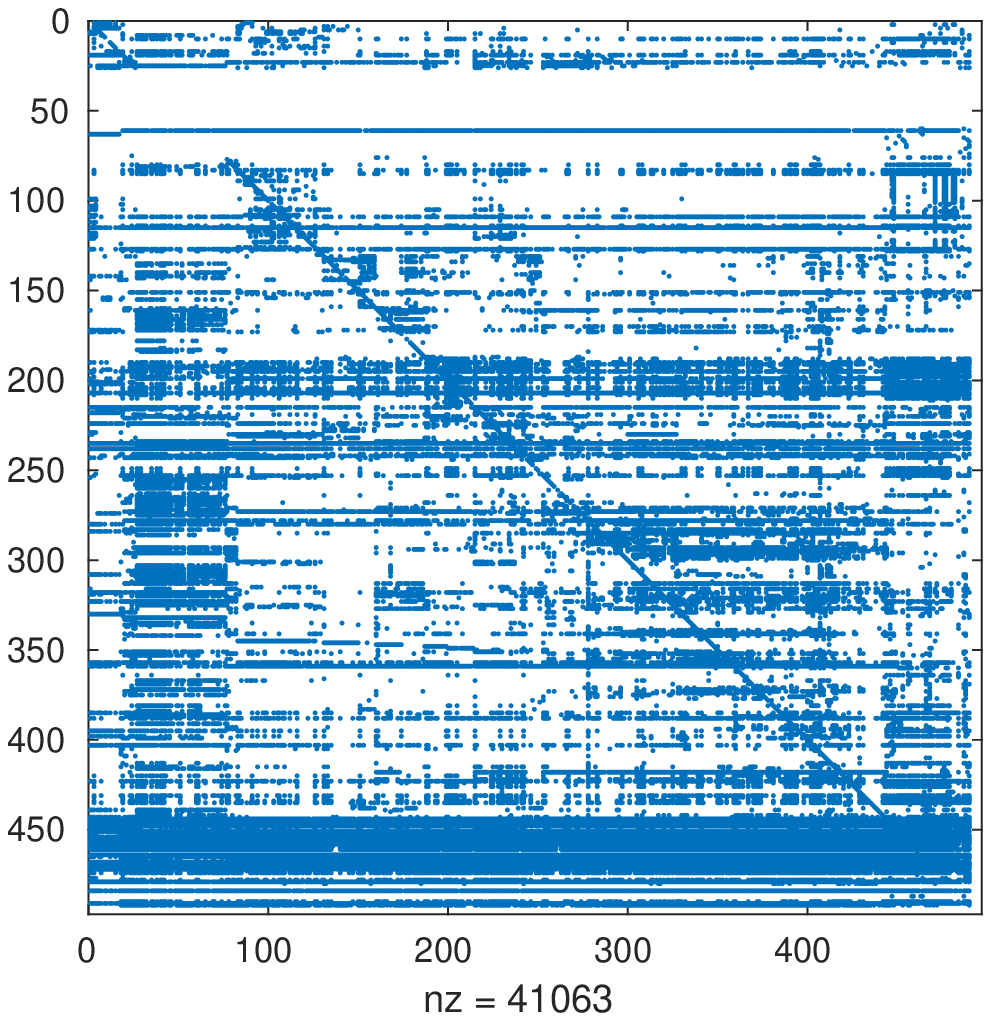}
    \caption{$A_{mbeause}$}
  \end{subfigure}
  \hfill
  \begin{subfigure}[b]{0.4\textwidth}
    \includegraphics[width=\textwidth]{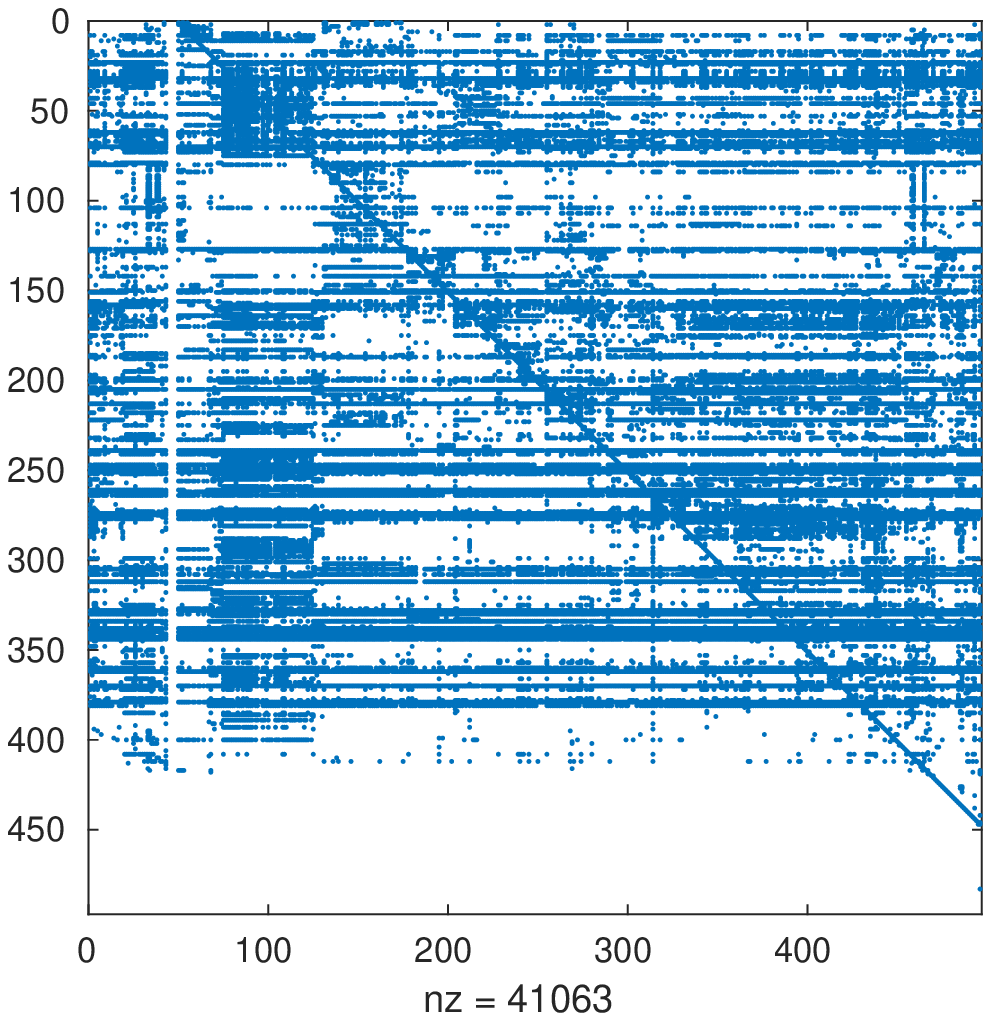}
    \caption{Permuted $A_{mbeause}$}
  \end{subfigure}
\caption{Spy Plots}  
\label{fig2:sp}  
\end{figure}

In Table \ref{t_ex_5} we compare the  computing times of   Algorithm  \ref{alg1} and \ref{alg2},  where
{\textit{FDE}} makes use of {\tt ahbeigs},  applied to the matrices $\widetilde A\colon=A+\gamma \B e\B e^T$ for
different values of $\gamma$ and $A=A_{jazz}, A_{mbeause}$.  In each experiment  the (block) starting vector is
$X={\tt rand}(n,m)$  where $n$ is the size of $A$, $m=1$ for Algorithm  \ref{alg1} and $m=16,32$ for  Algorithm  \ref{alg2}
applied to $A_{jazz}$ and $A_{mbeause}$, respectively.  

\begin{table}[ht]
\caption{Computing times of Algorithm  \ref{alg1} and \ref{alg2}   for different values of $\gamma$}
\centering 
\begin{tabular}{|c|| c| c| c|c||c|c|c|c|}
  \hline
   \multicolumn{1}{|c||}{} &
      \multicolumn{4}{|c||}{$A_{jazz}$} &
      \multicolumn{4}{|c|}{$A_{mbeause}$}\\
      \hline
$\gamma$ & 1.0e-8 & 1.0e-10& 1.0e-12& 1.0e-14& 1.0e-8 &1.0e-10& 1.0e-12& 1.0e-14\\ 
\hline
Alg1 &0.06& 0.18& 0.68& 2.87& 1.87 &14.43 & 112.26 & 808.26\\
\hline
Alg2 & 0.22& 0.24& 0.26& 0.36& 1.84 & 2.09&2.73 &3.61\\
\hline
\end{tabular}
\label{t_ex_5}
\end{table}

For these matrices  the methods
based on eigenvalue computations can be  dramatically faster than the fixed point iteration. 
  In particular,  the modified Algorithm \ref{alg2} applied to  $A_{jazz}$ with $\gamma=\tau=1.0-e-12$ and $m=16$ 
 converges in 14 iterations. In Figure \ref{f3} we show the error behavior as well  as  the singular values of the
balanced matrix (compare with  Theorem \ref{rate}).
\begin{figure}
    \begin{subfigure}{0.3\textwidth}
\includegraphics[scale=0.45]{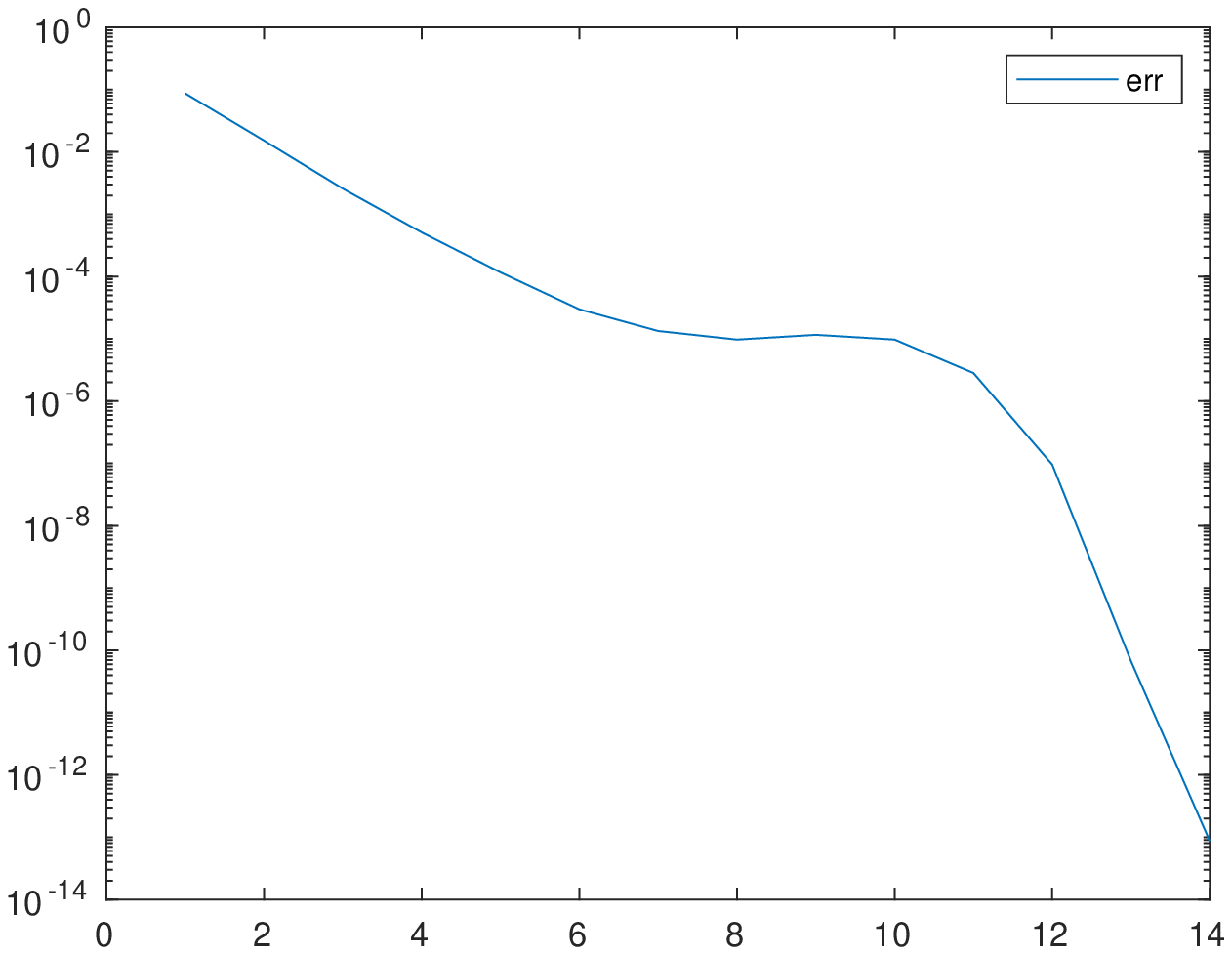}
    \end{subfigure}\hspace{0.2\textwidth}
    \begin{subfigure}{0.3\textwidth}
\includegraphics[scale=0.45]{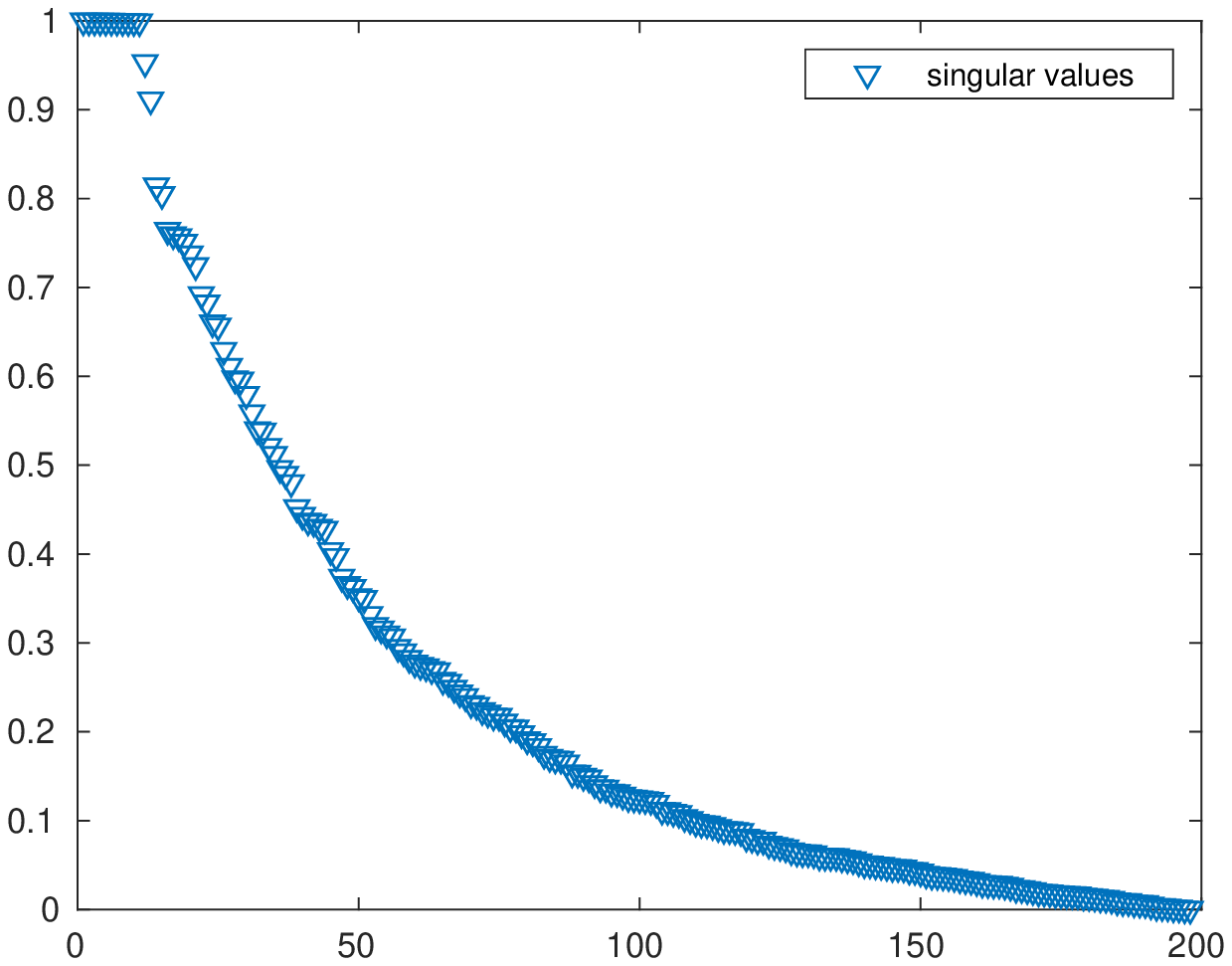}
\end{subfigure}
\caption{Error plot of modified Algorithm \ref{alg2} applied to the perturbed
jazz matrix and singular values of the  resulting balanced matrix}
\label{f3}
\end{figure}
It is worth stressing the accordance of the number of blocks  in the permuted matrix  with  the size of the
 cluster of singular values of the  balanced permuted matrix.

\section{Conclusions and  Future Work}\label{sec4}
In this paper we have discussed some numerical techniques for accelerating the customary SKK  iteration based on
certain equivalent formulations of the fixed point problem as a matrix eigenvalue problem.
Variants of the power  method relying upon the Arnoldi process
have been proposed for the efficient solution of the matrix eigenvalue problem.
There are several topics which remain to be addressed. Specifically: 
\begin{enumerate}
\item  A formal proof  of  the convergence for the SKK$_\infty$ method is still missing. As suggested  by
  Figure \label{fig:2} in this respect it might  be useful to
  investigate the properties of  the map
  $E\colon \mathcal P_0 \rightarrow \mathcal P_0$ defined  by $E(\B v) =\B w$ where
  $\B w$ is the normalized  dominant eigenvector of $J_T(\B v)$.  
  \item Theoretical results would  be extended to nonnegative matrices under customary assumptions on
    their supports.   Such extension can be based on the perturbative analysis  introduced in \cite{Menon1}
    and Section 6.2 of  \cite{LN_book}.
\item  The efficiency of the balancing schemes depends  on eigenvalue (singular value) clustering properties.
  Numerical experiments have revealed  a close connection between the 
  clustering of singular values of the balanced adjacency matrix and the clustering of nodes (community detection)
 in the corresponding graph. Relations  with the  block triangular form (BTF) form of adjacency matrices  have also appeared.
  The possible use of   balancing schemes for detecting
  the  block structure of  adjacency matrices is an ongoing research work. 
\item  Finally, we plan to  study the numerical behavior of  block Arnoldi based methods
  by performing extensive numerical experiments with large  sparse  and data-sparse matrices. In  particular
  following \cite{GoGr} we can take advantage of knowing the largest eigenvalue of the limit problem to
  speed up the intermediate steps.
  \end{enumerate}


\begin{thebibliography}{10}
\providecommand{\url}[1]{\texttt{#1}}
\providecommand{\urlprefix}{URL }
\expandafter\ifx\csname urlstyle\endcsname\relax
  \providecommand{\doi}[1]{doi:\discretionary{}{}{}#1}\else
  \providecommand{\doi}{doi:\discretionary{}{}{}\begingroup
  \urlstyle{rm}\Url}\fi

\bibitem{SK0}
Sinkhorn R. A relationship between arbitrary positive matrices and doubly
  stochastic matrices. \emph{Ann. Math. Statist.}  1964; \textbf{35}:876--879,
  \doi{10.1214/aoms/1177703591}.
  \urlprefix\url{https://doi.org/10.1214/aoms/1177703591}.

\bibitem{SK1}
Sinkhorn R, Knopp P. Concerning nonnegative matrices and doubly stochastic
  matrices. \emph{Pacific J. Math.}  1967; \textbf{21}:343--348.
  \urlprefix\url{http://projecteuclid.org/euclid.pjm/1102992505}.

\bibitem{SK2}
Sinkhorn R. Diagonal equivalence to matrices with prescribed row and column
  sums. \emph{Amer. Math. Monthly}  1967; \textbf{74}:402--405,
  \doi{10.2307/2314570}. \urlprefix\url{https://doi.org/10.2307/2314570}.

\bibitem{AP}
Parikh A. Forecasts of input-output matrices using the r.a.s. method. \emph{The
  Review of Economics and Statistics}  1979; \textbf{61}(3):477--481.
  \urlprefix\url{http://www.jstor.org/stable/1926084}.

\bibitem{CU}
Cuturi M. Sinkhorn distances: Lightspeed computation of optimal transport.
  \emph{Proceedings of the 26th International Conference on Neural Information
  Processing Systems - Volume 2}, NIPS'13, Curran Associates Inc.: USA, 2013;
  2292--2300.
  \urlprefix\url{http://dl.acm.org/citation.cfm?id=2999792.2999868}.

\bibitem{KN}
Knight PA. The {S}inkhorn-{K}nopp algorithm: convergence and applications.
  \emph{SIAM J. Matrix Anal. Appl.}  2008; \textbf{30}(1):261--275,
  \doi{10.1137/060659624}. \urlprefix\url{https://doi.org/10.1137/060659624}.

\bibitem{BF}
Bozzo E, Franceschet M. A theory on power in networks. \emph{Commun. ACM}  Oct
  2016; \textbf{59}(11):75--83, \doi{10.1145/2934665}.
  \urlprefix\url{http://doi.acm.org/10.1145/2934665}.

\bibitem{RL}
R\"uschendorf L. Convergence of the iterative proportional fitting procedure.
  \emph{Ann. Statist.}  1995; \textbf{23}(4):1160--1174,
  \doi{10.1214/aos/1176324703}.
  \urlprefix\url{https://doi.org/10.1214/aos/1176324703}.

\bibitem{LS}
Lamond B, Stewart NF. Bregman's balancing method. \emph{Transportation Res.
  Part B}  1981; \textbf{15}(4):239--248, \doi{10.1016/0191-2615(81)90010-2}.
  \urlprefix\url{https://doi.org/10.1016/0191-2615(81)90010-2}.

\bibitem{DB}
Diamond S, Boyd S. Stochastic matrix-free equilibration. \emph{J. Optim. Theory
  Appl.}  2017; \textbf{172}(2):436--454, \doi{10.1007/s10957-016-0990-2}.
  \urlprefix\url{https://doi.org/10.1007/s10957-016-0990-2}.

\bibitem{IDEL}
Idel M. A review of matrix scaling and {S}inkhorn's normal form for matrices
  and positive maps. \emph{Technical {R}eport}, arXiv:1609.06349 2016.

\bibitem{Menon2}
Menon MV. Some spectral properties of an operator associated with a pair of
  nonnegative matrices. \emph{Trans. Amer. Math. Soc.}  1968;
  \textbf{132}:369--375, \doi{10.2307/1994847}.
  \urlprefix\url{https://doi.org/10.2307/1994847}.

\bibitem{Menon1}
Menon MV, Schneider H. The spectrum of a nonlinear operator associated with a
  matrix. \emph{Linear Algebra and Appl.}  1969; \textbf{2}:321--334.

\bibitem{Ste}
Stewart GW. On the powers of a matrix with perturbations. \emph{Numer. Math.}
  2003; \textbf{96}(2):363--376, \doi{10.1007/s00211-003-0470-0}.
  \urlprefix\url{https://doi.org/10.1007/s00211-003-0470-0}.

\bibitem{KR}
Knight PA, Ruiz D. A fast algorithm for matrix balancing. \emph{IMA J. Numer.
  Anal.}  2013; \textbf{33}(3):1029--1047, \doi{10.1093/imanum/drs019}.
  \urlprefix\url{https://doi.org/10.1093/imanum/drs019}.

\bibitem{KKS}
Kalantari B, Khachiyan L, Shokoufandeh A. On the complexity of matrix
  balancing. \emph{SIAM J. Matrix Anal. Appl.}  1997; \textbf{18}(2):450--463,
  \doi{10.1137/S0895479895289765}.
  \urlprefix\url{https://doi.org/10.1137/S0895479895289765}.

\bibitem{PL}
Parlett BN, Landis TL. Methods for scaling to doubly stochastic form.
  \emph{Linear Algebra Appl.}  1982; \textbf{48}:53--79,
  \doi{10.1016/0024-3795(82)90099-4}.
  \urlprefix\url{https://doi.org/10.1016/0024-3795(82)90099-4}.

\bibitem{YYN}
Yin J, Yin G, Ng M. On adaptively accelerated {A}rnoldi method for computing
  {P}age{R}ank. \emph{Numer. Linear Algebra Appl.}  2012;
  \textbf{19}(1):73--85, \doi{10.1002/nla.789}.
  \urlprefix\url{https://doi.org/10.1002/nla.789}.

\bibitem{GoGr}
Golub GH, Greif C. An {A}rnoldi-type algorithm for computing {P}age{R}ank.
  \emph{BIT}  2006; \textbf{46}(4):759--771, \doi{10.1007/s10543-006-0091-y}.
  \urlprefix\url{https://doi.org/10.1007/s10543-006-0091-y}.

\bibitem{LN_book}
Lemmens B, Nussbaum R. \emph{Nonlinear {P}erron-{F}robenius theory},
  \emph{Cambridge Tracts in Mathematics}, vol. 189. Cambridge University Press,
  Cambridge, 2012, \doi{10.1017/CBO9781139026079}.
  \urlprefix\url{https://doi.org/10.1017/CBO9781139026079}.

\bibitem{BPS}
Brualdi RA, Parter SV, Schneider H. The diagonal equivalence of a nonnegative
  matrix to a stochastic matrix. \emph{J. Math. Anal. Appl.}  1966;
  \textbf{16}:31--50, \doi{10.1016/0022-247X(66)90184-3}.
  \urlprefix\url{https://doi.org/10.1016/0022-247X(66)90184-3}.

\bibitem{LN}
Lemmens B, Nussbaum R. \emph{Nonlinear {P}erron-{F}robenius theory},
  \emph{Cambridge Tracts in Mathematics}, vol. 189. Cambridge University Press,
  Cambridge, 2012, \doi{10.1017/CBO9781139026079}.
  \urlprefix\url{https://doi.org/10.1017/CBO9781139026079}.

\bibitem{AF}
Ando A, Fisher F. Near-decomposability, partition and aggregation, and the
  relevance of stability discussions. \emph{International Economic Review}
  1963; \textbf{4}(1):53--67.
  \urlprefix\url{http://www.jstor.org/stable/2525455}.

\bibitem{Minc}
Minc H. Nearly decomposable matrices. \emph{Linear Algebra and Appl.}  1972;
  \textbf{5}:181--187.

\bibitem{SA}
Simon H, Ando A. Aggregation of variables in dynamic systems.
  \emph{Econometrica}  1961; \textbf{29}(2):111--138.
  \urlprefix\url{http://www.jstor.org/stable/1909285}.

\bibitem{Cou}
Courtois PJ. \emph{Decomposability}. Academic Press [Harcourt Brace Jovanovich,
  Publishers], New York-London, 1977. Queueing and computer system
  applications, ACM Monograph Series.

\bibitem{nuio}
N$\iota$$\kappa$o$\lambda$$\alpha$$\kappa${\'o}$\pi$o$\upsilon$$\lambda$o$\varsigma$
  AN. Ranking under near decomposability. Ph{D} {T}hesis, Department of
  Computer Engineering and Informatics, University of Patras 2016.

\bibitem{DM}
Ashcraft C, Liu JWH. Applications of the {D}ulmage-{M}endelsohn decomposition
  and network flow to graph bisection improvement. \emph{SIAM J. Matrix Anal.
  Appl.}  1998; \textbf{19}(2):325--354, \doi{10.1137/S0895479896308433}.
  \urlprefix\url{https://doi.org/10.1137/S0895479896308433}.

\bibitem{BJ}
Baglama J. Augmented block {H}ouseholder {A}rnoldi method. \emph{Linear Algebra
  Appl.}  2008; \textbf{429}(10):2315--2334, \doi{10.1016/j.laa.2007.12.021}.
  \urlprefix\url{https://doi.org/10.1016/j.laa.2007.12.021}.

\bibitem{jazz}
Gleiser P, Danon L. Community structure in jazz. \emph{Advances in Complex
  Systems}  2003; \textbf{6}(4).

\end{thebibliography}

\end{document}